\documentclass[10pt]{amsart}
\usepackage{amsmath, amscd, amsthm} 
\usepackage{amssymb, amsfonts} 
\usepackage{enumerate}
\usepackage{bbm}
\usepackage{color}
 \usepackage{soul}
\usepackage[all]{xy} 
\usepackage[left=1in, right=1.5in, bottom=1.6in]{geometry}
\usepackage{palatino}
\subjclass[2010]{Primary: 14F42, 14L30, 14R20, 19E15, 55Q99}
\keywords{Motivic homotopy theory, relative $\AA^1$-contractibility, $\AA^n$-fiber spaces}
\usepackage{mathrsfs}
\usepackage{booktabs} 
\usepackage{aliascnt}
\usepackage{mathrsfs}
\usepackage[svgnames]{xcolor} 
\usepackage{hyperref}
 \definecolor{dark-red}{rgb}{0.4,0.15,0.15}
\hypersetup{
    colorlinks, linkcolor=dark-red,
    citecolor=DarkBlue, urlcolor=MediumBlue
}
\usepackage{mathrsfs}
\usepackage{tikz-cd}
\usepackage[textsize=small]{todonotes}
\usepackage[capitalise]{cleveref}
\usepackage{stackrel}

\renewcommand{\AA}{\mathbb{A}}

\newcommand{\GG}{\mathbb{G}}

\newcommand{\Spec}{\operatorname{Spec}}

\newcommand{\mcal}[1]{\mathcal{#1}}

\renewcommand{\setminus}{\smallsetminus}

\DeclareMathOperator{\Sym}{Sym}

\usepackage[T1]{fontenc}

\numberwithin{equation}{section} 
 \theoremstyle{plain}
 \theoremstyle{definition}
 
\newaliascnt{theorem}{equation}  
\newtheorem{theorem}[theorem]{Theorem}
\aliascntresetthe{theorem}  
  
\newtheorem*{theorem*}{Theorem}
\newaliascnt{prop}{equation}  
\newtheorem{prop}[prop]{Proposition}
\aliascntresetthe{prop}

\newaliascnt{lemma}{equation}  
\newtheorem{lemma}[lemma]{Lemma}
\aliascntresetthe{lemma}

\newaliascnt{corollary}{equation}  
\newtheorem{corollary}[corollary]{Corollary}
\aliascntresetthe{corollary}

\newaliascnt{claim}{equation}  

\aliascntresetthe{claim}

\newaliascnt{conjecture}{equation}  

\aliascntresetthe{conjecture}

\newaliascnt{question}{equation}  
\newtheorem{question}[question]{Question}
\aliascntresetthe{question}

\newaliascnt{defn}{equation}  
\newtheorem{defn}[defn]{Definition}
\aliascntresetthe{defn}

\newaliascnt{example}{equation}  
\newtheorem{example}[example]{Example}
\aliascntresetthe{example}

\theoremstyle{remark}

\newaliascnt{remark}{equation}  
\newtheorem{remark}[remark]{Remark}
\aliascntresetthe{remark}

\newaliascnt{convention}{equation}  

\aliascntresetthe{convention}

\newcommand{\Spc}{\operatorname{Spc}}

\newcommand{\PP}{\mathbb{P}}

\newcommand{\Pic}{\operatorname{Pic}}

\begin{document}
\title{Relative $\AA^{1}$-contractibility of smooth schemes}
\author{Adrien Dubouloz}
\address{CNRS, Université de Poitiers, LMA, Poitiers, France  \& Université Bourgogne Europe, CNRS, IMB UMR 5584, 21000 Dijon, France}
\email{adrien.dubouloz@math.cnrs.fr}

\author{Krishna Kumar Madhavan Vijayalakshmi}
\address{Department of Mathematics ``F. Enriques'', University of Milan, Italy \& 
Université Bourgogne Europe, CNRS, IMB UMR 5584, 21000 Dijon, France }
\email{krishna.madhavan@unimi.it \& krishna.madhavan@ube.fr}

\author{Paul Arne {\O}stv{\ae}r}
\address{Department of Mathematics ``F. Enriques'', University of Milan, Italy}
\email{paul.oestvaer@unimi.it}

\begin{abstract}
We study smooth morphisms $f \colon X \to S$ that are $\mathbb{A}^1$-contractible in the unstable $\AA^1$-homotopy category $\mathcal{H}(S)$. For base schemes $S$ of finite Krull dimension, we show that $\AA^1$-contractibility is a fiberwise property: such a morphism is $\AA^1$-contractible if and only if all its geometric fibers are $\AA^1$-contractible. We apply this criterion to $\AA^n$-fiber spaces, obtaining a geometric description of their $\AA^1$-contractibility in terms of local factorizations as towers of torsors under vector bundles, building on results of Asanuma. In low relative dimensions, we establish rigidity results. In relative dimension $1$, $\AA^1$-contractible morphisms over normal bases are precisely Zariski locally trivial $\AA^1$-bundles. In relative dimension $2$, we show that over bases with characteristic zero residue fields, $\AA^1$-contractible morphisms are $\AA^2$-fiber spaces, and we obtain Zariski local triviality under additional hypotheses on the base. We also exhibit counterexamples in positive and mixed characteristic, and formulate open problems concerning the existence of exotic $\AA^1$-contractible surfaces.
\end{abstract}

\maketitle

\section*{Introduction} \label{sec:intro}

A central theme in $\AA^1$-homotopy theory is to understand how motivic homotopy types reflect the geometry of algebraic varieties. Although $\AA^1$-contractible varieties are indistinguishable from a point in the unstable $\AA^1$-homotopy category, they can exhibit rich and subtle geometry, especially in families. This leads to natural questions: is $\AA^1$-contractibility detected fiberwise for smooth morphisms? What geometric constraints does it impose? And how does it interact with classical structures such as affine fibrations and vector bundle torsors?
\medskip

While these questions have been extensively studied over fields, their relative counterparts remain largely unexplored. In particular, $\AA^1$-contractibility over higher-dimensional bases—such as Dedekind schemes—has not been systematically investigated. One aim of this paper is to develop a structural theory of $\AA^1$-contractible morphisms over a general base.
\medskip

A guiding analogy comes from topology: fiber bundles with contractible fibers are typically homotopy equivalent to the base, and often trivial under mild hypotheses. Thus contractibility tends to eliminate obstructions and impose strong local structure. We show that a similar principle holds in $\AA^1$-homotopy theory: for smooth morphisms, $\AA^1$-contractibility is a fiberwise property.
\medskip

Algebraic varieties that are weakly contractible in the sense of Morel--Voevodsky $\AA^1$-homotopy theory have been intensively studied in recent years. In dimensions $d \geq 3$, this has led to the construction of many smooth $\AA^1$-contractible varieties not isomorphic to affine space, including families with positive-dimensional moduli (see \cite{asok2021A1} for a survey). 
In low dimensions, however, $\AA^1$-contractibility exhibits strong rigidity. The affine line $\AA^1$ is the only $\AA^1$-contractible quasi-projective curve. In dimension $2$, it was only recently shown that, over fields of characteristic zero, the affine plane $\AA^2$ is the unique smooth $\AA^1$-contractible affine surface \cite{choudhury2024}. In positive characteristic, the existence of exotic $\AA^1$-contractible surfaces remains open.
\medskip

In this work, we study the \emph{relative} form of these questions. Given a base scheme $S$, we investigate smooth morphisms $f \colon X \to S$ that are $\AA^1$-contractible over $S$, i.e.\ isomorphisms in the unstable $\AA^1$-homotopy category $\mathcal{H}(S)$. Our starting point is the observation that, under mild hypotheses, $\AA^1$-contractibility is entirely controlled by the fibers.

\medskip

\noindent
\textbf{Main Results.}
\begin{enumerate}
\item \emph{Fiberwise characterization.}  
Let $S$ be a qcqs scheme of finite Krull dimension and let $f \colon X \to S$ be a smooth morphism. Then $f$ is $\AA^1$-contractible if and only if, for every point $s \in S$, the fiber $X_s$ is $\AA^1$-contractible over $\kappa(s)$.

\item \emph{Structure of $\AA^n$-fiber spaces.}  
Let $f \colon X \to S$ be an $\AA^n$-fiber space over a locally noetherian scheme. Under mild hypotheses (e.g.\ $f$ affine or $S$ regular), $X$ admits Zariski locally a factorization as a tower of torsors under vector bundles. 

\item \emph{Low-dimensional rigidity.}
\begin{itemize}
\item In relative dimension $1$, $\AA^1$-contractible morphisms over normal bases are precisely Zariski locally trivial $\AA^1$-bundles.
\item In relative dimension $2$, and over base schemes with characteristic zero residue fields, $\AA^1$-contractibility forces the structure of an $\AA^2$-fiber space. Over Dedekind schemes, such morphisms are Zariski locally trivial provided they are affine. 
\end{itemize}

\item \emph{Pathologies in positive and mixed characteristic.}  
In positive and mixed characteristic, the picture changes markedly. We exhibit $\AA^2$-fiber spaces that are not Zariski locally trivial, showing that natural triviality results for affine fibrations fail in this setting. In addition, we exhibit failures of cancellation: there exist smooth $S$-schemes $X$ with $X\times_S \AA^1_S \cong \AA^3_S$ that are nevertheless not isomorphic to $\AA^2_S$. These pathologies contrast sharply with the rigidity observed in characteristic zero and lead us to formulate open questions concerning the existence of exotic $\AA^1$-contractible surfaces.
\end{enumerate}

\medskip

The first result refines the analogous criterion in stable motivic homotopy theory and relies on a combination of base change techniques and a gluing argument originating in \cite{MV99}. It shows that $\mathcal{H}(S)$ does not detect variation in the isomorphism types of the fibers of a smooth family of $\AA^1$-contractible varieties. Consequently, all known smooth non-isotrivial families of $\AA^1$-contractible varieties of relative dimension $d \geq 3$ over positive-dimensional bases $S$ (see, for example, \cite{DF14,dubouloz2025,Krishna25}) are $\AA^1$-contractible over $S$. 
The second result builds on Asanuma’s work on quasi-polynomial algebras \cite{asanuma82,asanuma1987polynomial} and provides a geometric explanation for the ubiquity of $\AA^1$-contractible morphisms arising from affine fibrations. 
The remaining problems concern, on the one hand, the classification of relatively $\AA^1$-contractible morphisms of relative dimension at most $2$, and, on the other hand, the local structure of such morphisms in the Zariski and Nisnevich topologies.
\medskip

A central theme of the paper is the contrast between low and high dimensions. While dimension $\geq 3$ admits abundant nontrivial families of $\AA^1$-contractible varieties, dimensions $\leq 2$ exhibit strong rigidity. This dichotomy parallels classical phenomena in topology and affine algebraic geometry.
\medskip
\noindent

\textbf{Strategy of proof.}
The fiberwise characterization of $\AA^1$-contractible morphisms is obtained by combining base change properties in the unstable $\AA^1$-homotopy category with a gluing argument along a stratification of the base, ultimately reducing to the case of local schemes. This argument may be viewed as an unstable refinement of analogous results in stable motivic homotopy theory.
\medskip

The structural results for $\AA^n$-fiber spaces rely on Asanuma’s description of affine fibrations in terms of quasi-polynomial algebras, together with geometric input from vector bundle torsors and the Jouanolou--Thomason's trick. This allows us to make the $\AA^1$-contractibility of such morphisms more explicit.
\medskip
S
The low-dimensional classification results combine the fiberwise criterion with classical results in affine algebraic geometry, including the work of Kambayashi, Miyanishi--Sugie, and Russell in dimension $2$, and Sathaye’s theorem on affine fibrations over Dedekind schemes. The analysis of positive and mixed characteristic examples highlights the limitations of these methods and motivates the open problems we formulate.
\medskip

An ingredient in our approach is the use of techniques introduced in \cite{DDO}, which make it possible to study $\AA^1$-homotopy types through the geometry at infinity. These methods relate $\AA^1$-contractibility to the structure of compactifications via the stable $\AA^1$-homotopy type of punctured tubular neighborhoods of the boundary. This viewpoint yields strong geometric constraints, and in particular provides a new mechanism for proving affineness and rigidity results for $\AA^1$-contractible surfaces.

\medskip
\noindent
\textbf{Structure of the paper.}
Section~1 recalls the unstable $\AA^1$-homotopy category and develops the functorial tools needed for our arguments, particularly concerning base change and localization. In Section~2, we study $\AA^n$-fiber spaces and show that their $\AA^1$-contractibility admits a concrete geometric description in terms of iterated vector bundle torsors. Section~3 focuses on low-dimensional phenomena: we obtain a complete picture in relative dimension $1$, and in dimension $2$ we establish rigidity results in characteristic zero while highlighting pathologies in positive and mixed characteristic, leading to several open questions.

\section{Preliminaries}

\subsubsection{Conventions}
Unless otherwise stated, we work over a quasi-compact and quasi-separated base scheme $S$. The term \emph{smooth $S$-scheme} refers to a smooth morphism $f \colon X \to S$ of \emph{finite presentation}. We denote by $Sm_S$ the category of such schemes. Given a morphism of schemes $u \colon T \to S$ and a smooth $S$-scheme $X \to S$, 
we write $X_T$ for the smooth $T$-scheme $X \times_S T \to T$.

\subsection{Elements of \texorpdfstring{$\AA^1$}{A1}-homotopy theory}\label{A1-homotopy theory}
We recall the basic definitions and properties of the $\mathbb{A}^1$-homotopy category $\mathcal{H}(S)$ over a base scheme $S$ that will be used throughout the article. We refer the reader to \cite{MV99, blander2001local, zbMATH02028920, zbMATH05116082, antieau2017primer, wickelgren2020unstable} for further background. 
\medskip

In what follows, $\mathcal{H}(S)$ denotes the homotopy category associated with the model category of motivic spaces over $S$
\[
\mathrm{Spc}_S^{\AA^1} := L_{\AA^1} L_{\mathrm{Nis}} \mathrm{P}(Sm_S),
\]
where $\mathrm{P}(Sm_S)$ denotes the category of presheaves of simplicial sets on $Sm_S$ 
endowed with the projective model structure. 
Here, $L_{\mathrm{Nis}} \mathrm{P}(Sm_S)$ denotes the Nisnevich-local model structure obtained by left 
Bousfield localization with respect to Nisnevich (hyper)covers, 
and $L_{\AA^1}$ denotes the $\AA^1$-localization functor arising from the projection maps
$\AA^1_S \times_S X \to X$, where $X$ ranges over objects of $Sm_S$. 
\medskip

The fibrant objects of $\mathrm{Spc}_S^{\AA^1}$ are called \emph{$\AA^1$-local spaces}, 
and its weak equivalences are called \emph{$\AA^1$-weak equivalences}. 
Throughout, we identify an $S$-scheme with its representing motivic space (the Nisnevich topology is canonical). 
If $X$ and $Y$ are presheaves of simplicial sets on $Sm_S$, we write
\[
[X,Y]_{\AA^1} := \mathrm{Hom}_{\mathcal{H}(S)}(X,Y)
\]
for the set of $\AA^1$-homotopy classes of maps from $X$ to $Y$. 
We now recall additional basic notions.

\begin{defn}\label{A1-rigid}\label{defn:A1-contractibility}
Let $f \colon X \to S$ be a smooth $S$-scheme.
\begin{enumerate}
    \item The \emph{sheaf of $\AA^1$-connected components} of $X$ is the Nisnevich sheaf 
    $\pi_0^{\AA^1}(X)$ on $Sm_S$ 
    associated with the presheaf $U \mapsto [U,X]_{\AA^1}$.  
    We say that $X$ is \emph{$\AA^1$-connected} (over $S$) if the canonical morphism 
    $\pi_0^{\AA^1}(X) \to S$ is an isomorphism of sheaves.
    \item We say that $X$ is \emph{$\AA^1$-rigid} (over $S$) if, for any $U \in Sm_S$, 
    the map $X(U) \to X(\AA^1_U)$ induced by the projection 
    $\mathrm{pr}_1 \colon \AA^1_U = U \times_S \AA^1_S \to U$ is a bijection.  
    \item We say that $X$ is \emph{$\AA^1$-contractible} (over $S$) 
    if $f$ is an $\AA^1$-weak equivalence in $\mathrm{Spc}_S^{\AA^1}$.
\end{enumerate}
\end{defn}

\begin{example}\label{S-point}
Every $\AA^1$-connected smooth scheme $X$ over a locally Henselian scheme $S$ (for instance, the spectrum of a field) admits an $S$-point (see \cite[Remark~2.5]{MV99}).
\end{example}

\begin{example}
A smooth $S$-scheme is fibrant in $\mathrm{Spc}_S^{\AA^1}$ if and only if it is $\AA^1$-rigid. 
Moreover, two $\AA^1$-rigid smooth $S$-schemes are isomorphic as $S$-schemes 
if and only if they are $\AA^1$-weakly equivalent 
(see, e.g., \cite[Lemma~4.1.8 and Corollary~4.1.9]{asok2021A1}).  
For an $\AA^1$-rigid smooth $S$-scheme $X$, the canonical map 
$X \to \pi_0^{\AA^1}(X)$ is an isomorphism of Nisnevich sheaves; 
in particular, such a scheme is not $\AA^1$-connected.  
Examples of $\AA^1$-rigid schemes include the multiplicative group scheme $\mathbb{G}_m$, 
smooth curves of positive genus, and abelian varieties over a field $k$. 
For more examples, see \cite[\S 2]{AM11} or \cite[\S 2.2]{choudhury2024}.
\end{example}

\begin{example}\label{eg:A1-contractibles}
Any Nisnevich locally trivial bundle $f \colon X \to S$
with $\AA^1$-contractible fibers is $\AA^1$-contractible 
(see \cite[Examples~2.3 and~2.4]{MV99}).
\end{example}

\subsection{Functoriality and base change in \texorpdfstring{$\AA^1$}{A1}-homotopy theory}
We recall functorial properties of model categories of motivic spaces that play a central role in understanding how $\AA^1$-contractibility behaves under composition and base change. Standard references for this material include \cite{Ayoub2007six, CD2019} and \cite[\S3]{MV99}; see also \cite[\S4]{hoyois2017six}, \cite[\S2]{zbMATH07566080}, and \cite{bachmann2024stronglya1} for recent perspectives.

\subsubsection{Base change properties}

We follow \cite[Section~3]{oropenbook}.  
Let $u:T\to S$ be a morphism of schemes.  
Pullback along $u$ gives rise to a functor
\[
u_*:\mathrm{P}(Sm_T)\to  \mathrm{P}(Sm_S),\qquad
F\longmapsto (X\longmapsto F(X_T)),
\]
which preserves limits and therefore admits a left adjoint
\[
u^*:\mathrm{P}(Sm_S)\to  \mathrm{P}(Sm_T).
\]
Placing $\mathrm{P}(Sm_S)$ and $\mathrm{P}(Sm_T)$ in their projective model structures, and writing $\mathrm{Spc}_S^{\AA^1}$ and $\mathrm{Spc}_T^{\AA^1}$ for their $\AA^1$-localizations, we obtain Quillen adjunctions
\begin{equation}
\mathrm{P}(Sm_S)
  \stackrel{\overset{u_*}{\longleftarrow}}{\underset{u^*}{\rightarrow}}
\mathrm{P}(Sm_T),
\qquad
\mathrm{Spc}_S^{\AA^1}
  \stackrel{\overset{u_*}{\longleftarrow}}{\underset{u^*}{\rightarrow}}
\mathrm{Spc}_T^{\AA^1}.
\end{equation}

When $u:T\to S$ is smooth, $u^*$ is induced by the forgetful functor $Sm_T\to Sm_S$ obtained by composition with~$u$. Since $u^*$ then preserves limits, it admits a left adjoint
\[
u_\sharp:\mathrm{P}(Sm_T)\to  \mathrm{P}(Sm_S),
\]
and the pair $(u_\sharp,u^*)$ induces additional Quillen adjunctions
\begin{equation}
\mathrm{P}(Sm_T)
  \stackrel{\overset{u^*}{\longleftarrow}}{\underset{u_\sharp}{\rightarrow}}
\mathrm{P}(Sm_S),
\qquad
\mathrm{Spc}_T^{\AA^1}
  \stackrel{\overset{u^*}{\longleftarrow}}{\underset{u_\sharp}{\rightarrow}}
\mathrm{Spc}_S^{\AA^1}.
\end{equation}

These adjunctions control how $\AA^1$-weak equivalences behave under pullback and, when $u$ is smooth, also under pushforward.

\begin{prop}\label{cor:pushpull-A1-weak}
Let $u:T \to S$ be a morphism of schemes. Then:
\begin{enumerate}
  \item  
  If $f:Y\to X$ is a morphism of smooth $S$-schemes that is an $\AA^1$-weak equivalence in $\mathrm{Spc}_S^{\AA^1}$, then the base change
  $f_T:Y_T\to X_T$ is an $\AA^1$-weak equivalence in $\mathrm{Spc}_T^{\AA^1}$.

  \item  
  If $u$ is smooth and $f:Y\to X$ is a morphism of smooth $T$-schemes that is an $\AA^1$-weak equivalence in $\mathrm{Spc}_T^{\AA^1}$, then $f$ is an $\AA^1$-weak equivalence in $\mathrm{Spc}_S^{\AA^1}$.
\end{enumerate}
\end{prop}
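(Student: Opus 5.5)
Both parts will follow from the Quillen adjunctions just recalled, via Ken Brown's lemma: a left Quillen functor preserves weak equivalences between cofibrant objects. In the projective model structure on $\mathrm{P}(Sm_S)$, representable presheaves are cofibrant; since $\mathrm{Spc}_S^{\AA^1}$ is a left Bousfield localization, it has the same cofibrations. Hence every smooth $S$-scheme, viewed as a motivic space, is cofibrant in $\mathrm{Spc}_S^{\AA^1}$, and the same holds over $T$.

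For (1), apply the left Quillen functor $u^*\colon \mathrm{Spc}_S^{\AA^1}\to \mathrm{Spc}_T^{\AA^1}$ to the $\AA^1$-weak equivalence $f\colon Y\to X$. Since $Y$ and $X$ are cofibrant, Ken Brown's lemma shows that $u^*f$ is an $\AA^1$-weak equivalence. It then remains to identify $u^*$ on representables. On presheaves, $u^*$ is the left Kan extension along $Sm_S\to Sm_T$, $V\mapsto V_T$, so by Yoneda $u^*(h_V)=h_{V_T}$ naturally in $V$. Indeed,
\[
\mathrm{Hom}(u^*h_V,F)=\mathrm{Hom}(h_V,u_*F)=F(V_T)=\mathrm{Hom}(h_{V_T},F).
\]
Thus $u^*f$ is identified with $f_T\colon Y_T\to X_T$, which proves (1). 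The point that needs care is that the derived functor $\mathbf{L}u^*$ agrees with the underived $u^*$ on these objects; this is exactly what cofibrancy of representables provides.

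For (2), take $u$ smooth and apply the left Quillen functor $u_\sharp\colon \mathrm{Spc}_T^{\AA^1}\to\mathrm{Spc}_S^{\AA^1}$. Again $Y$ and $X$ are cofibrant, so $u_\sharp f$ is an $\AA^1$-weak equivalence. Moreover, $u_\sharp$ sends the representable of $W\in Sm_T$ to the representable of $W$ regarded as an $S$-scheme via $W\to T\to S$, since $u^*$ is restriction along this forgetful functor. Therefore $u_\sharp f$ is $f$ viewed as a morphism of smooth $S$-schemes, which proves (2).

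The main obstacle is not these formal steps but confirming that the adjunctions stated above really are Quillen for the $\AA^1$-Nisnevich localized structures. For $u^*$ this amounts to checking that $u^*$ sends Nisnevich hypercovers to Nisnevich hypercovers and $\AA^1$-projections $\AA^1_V\to V$ to $\AA^1$-projections. Both hold because base change preserves étale maps, Nisnevich squares and $\AA^1$-bundles. For $u_\sharp$ it amounts to checking that Nisnevich covers and $\AA^1$-projections over $T$ remain such over $S$, which is clear. Since the paper asserts these Quillen adjunctions, following \cite{oropenbook}, I would cite them and note the universal property of left Bousfield localization as the justification.
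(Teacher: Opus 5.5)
Your proposal is correct and follows essentially the same route as the paper. Both arguments use cofibrancy of representables in the localized projective structure, Ken Brown's lemma applied to the left Quillen functors $u^*$ and $u_\sharp$, and the fact that $u^*$ and $u_\sharp$ preserve representables. Your Yoneda verification that $u^*h_V\cong h_{V_T}$, and your remarks on why the adjunctions stay Quillen after Nisnevich and $\AA^1$-localization, make explicit what the paper takes as given.
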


\begin{proof}
To prove assertion (1), consider the commutative diagram
\[
\begin{tikzcd}
  \mathrm{P}(Sm_S) \arrow{r}{u^*} \arrow{d}[swap]{\mathrm{id}} &
  \mathrm{P}(Sm_T) \arrow{d}{\mathrm{id}} \\
  \mathrm{Spc}_S^{\AA^1} \arrow{r}{u^*} &
  \mathrm{Spc}_T^{\AA^1}.
\end{tikzcd}
\]
Here $u^*$ sends the presheaves represented by $X$ and $Y$ to those represented by $X_T$ and $Y_T$, and it sends the map induced by $f$ to the map induced by $f_T$.  
Representables are cofibrant in $\mathrm{P}(Sm_S)$ and remain so after $\AA^1$-localization.  
Since $u^*$ is a left Quillen functor, Brown’s lemma \cite[Lemma 1.1.12]{hovey2007model} ensures that $f_T$ is an $\AA^1$-weak equivalence in $\mathrm{Spc}_T^{\AA^1}$.
\medskip

Assertion (2) follows from the fact that $u_\sharp$ preserves representable presheaves and by applying the same argument to the commutative diagram
\[
\begin{tikzcd}
  \mathrm{P}(Sm_T) \arrow{r}{u_\sharp} \arrow{d}[swap]{\mathrm{id}} &
  \mathrm{P}(Sm_S) \arrow{d}{\mathrm{id}} \\
  \mathrm{Spc}_T^{\AA^1} \arrow{r}{u_\sharp} &
  \mathrm{Spc}_S^{\AA^1}.
\end{tikzcd}
\]
\end{proof}

The preceding proposition allows one to transport $\AA^1$-weak equivalences along smooth morphisms.  
In particular, it yields the following compatibility under composition.

\begin{corollary}\label{lem:3-from-2}
Let $f:Y\to X$ and $g:Z\to Y$ be smooth morphisms of smooth $S$-schemes.  
If $g$ is an $\AA^1$-weak equivalence in $\mathrm{Spc}_Y^{\AA^1}$ and 
$f$ is an $\AA^1$-weak equivalence in $\mathrm{Spc}_X^{\AA^1}$, 
then the composite $h=f\circ g:Z\to X$ is an $\AA^1$-weak equivalence in $\mathrm{Spc}_X^{\AA^1}$.
\end{corollary}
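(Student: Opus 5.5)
The plan is to transport the weak equivalence $g$ from $\mathrm{Spc}_Y^{\AA^1}$ down to $\mathrm{Spc}_X^{\AA^1}$ using Proposition~\ref{cor:pushpull-A1-weak}(2), and then to use two-out-of-three for weak equivalences in the single model category $\mathrm{Spc}_X^{\AA^1}$.

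First, note what the hypotheses mean. The statement that $g$ is an $\AA^1$-weak equivalence in $\mathrm{Spc}_Y^{\AA^1}$ means that $g\colon Z\to Y$, viewed as a morphism of smooth $Y$-schemes from $Z$ to the terminal object $Y$, is a weak equivalence there. Similarly, $f\colon Y\to X$, viewed as a morphism of smooth $X$-schemes from $Y$ to the terminal object $X$, is a weak equivalence in $\mathrm{Spc}_X^{\AA^1}$. Since $f$ is smooth (and of finite presentation, by the conventions), we may apply Proposition~\ref{cor:pushpull-A1-weak}(2) with $u=f\colon Y\to X$. The functor $f_\sharp$ sends the representables $Z$ and $Y$ over $Y$ to the same schemes regarded over $X$ via composition with $f$. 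Here $Z$ is smooth over $X$ because $h=f\circ g$ is a composite of smooth morphisms. Hence the proposition shows that $g\colon Z\to Y$ is an $\AA^1$-weak equivalence in $\mathrm{Spc}_X^{\AA^1}$.

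Now, in $\mathrm{Spc}_X^{\AA^1}$ we have the composable morphisms $Z\xrightarrow{g}Y\xrightarrow{f}X$. Both are weak equivalences: $g$ by the previous step, and $f$ by hypothesis. Weak equivalences in a model category are closed under composition (two-out-of-three), so $h=f\circ g$ is an $\AA^1$-weak equivalence in $\mathrm{Spc}_X^{\AA^1}$. This is the claim.

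The only point needing care is the middle step. One must check that Proposition~\ref{cor:pushpull-A1-weak}(2) applies, namely that $u=f$ is smooth and that both $Z$ and $Y$ are smooth $Y$-schemes. One must also check that the morphism $f_\sharp(g)$ is literally $g$ regarded over $X$. Both points follow from $f_\sharp$ being left Kan extension along the forgetful functor $Sm_Y\to Sm_X$, which sends representables to representables.
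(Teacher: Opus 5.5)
Your proposal is correct and is essentially the paper's own proof: apply Proposition~\ref{cor:pushpull-A1-weak}(2) with $u=f$ to see that $g$ is an $\AA^1$-weak equivalence in $\mathrm{Spc}_X^{\AA^1}$, then conclude by two-out-of-three in $\mathrm{Spc}_X^{\AA^1}$. Your additional checks are the details the paper leaves implicit: that $f_\sharp$ sends the representables $Z,Y$ over $Y$ to the same schemes over $X$, and that $Z$ is smooth over $X$.
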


\begin{proof}
Since $f$ is smooth, Proposition~\ref{cor:pushpull-A1-weak}(2) implies that $g$ is an $\AA^1$-weak equivalence in $\mathrm{Spc}_X^{\AA^1}$. The assertion then follows from the two-out-of-three property.
\end{proof}

In the setting of \Cref{lem:3-from-2}, the reverse implication 
\[ f,g\ \text{are $\AA^1$-weak equivalences in }\Spc^{\AA^1}_X
\quad \implies \quad
g\ \text{is an $\AA^1$-weak equivalence in }\Spc^{\AA^1}_Y, \]
does not hold in general, as illustrated by the following example (see also \Cref{exa:form-generic-fiber}).

\begin{example}\label{ex:A1-cont-not-factorize}
Consider the smooth morphism of $k$-schemes
\[
g:Z=\AA^2_k\to Y=\AA^1_k,\qquad (x,y)\longmapsto x^2y^2+y.
\]
Both $Z$ and $Y$ are $\AA^1$-contractible over $X=\Spec(k)$, and hence $g$ is an $\AA^1$-weak equivalence in $\mathrm{Spc}_k^{\AA^1}$.  
However, $g$ is not an $\AA^1$-weak equivalence in $\mathrm{Spc}_{\AA^1_k}^{\AA^1}$.  
Indeed, if it were, then \Cref{cor:pushpull-A1-weak}(1), applied to the inclusions $i:s\to \mathbb{A}^1_k$ of points $s$ of $\AA^1_k$, would imply that for every point $s$, the fiber $Z_s$ of $g$ over $s$ is $\AA^1$-contractible over the residue field $\kappa(s)$. However, the fiber over $0$ is the disjoint union of a copy of $\AA^1_k$ and a copy of $\GG_{m,k}$, which is not even $\AA^1$-connected in $\mathrm{Spc}_k^{\AA^1}$.  
Thus $g$ fails to be an $\AA^1$-weak equivalence relative to the base $\AA^1_k$.
\end{example}

A natural question raised by Proposition~\ref{cor:pushpull-A1-weak}(1) is whether the existence of a morphism $u: T \to S$ for which $f_T$ is an $\AA^1$-weak equivalence implies that $f$ itself is an $\AA^1$-weak equivalence. 
The following example illustrates that even faithful flatness of $u$ is insufficient; thus a converse to Proposition~\ref{cor:pushpull-A1-weak}(1) fails in broad generality. 

\begin{example}\label{ex:forms-A1}
Let $k$ be an imperfect field of characteristic $p>0$ with perfect closure $k^{\mathrm{perf}}$, and let 
\[
g:\Spec(k^{\mathrm{perf}})\to \Spec(k)
\]
be the morphism induced by the inclusion $k\subset k^{\mathrm{perf}}$.  
Let $f:X\to\Spec(k)$ be a non-trivial $k$-form of the affine line $\AA^1_k$. Since $\AA^1_k$ has no nontrivial separable forms \cite{russell1970forms}, the base change $X_{k^{\mathrm{perf}}}$ is isomorphic to $\AA^1_{k^{\mathrm{perf}}}$, and hence is $\AA^1$-contractible over $k^{\mathrm{perf}}$. However, $X$ itself is not $\AA^1$-contractible over $k$. Indeed, by \cite{russell1970forms}, either $X(k)=\emptyset$ or $X$ is the complement in $\PP^1_k$ of a closed point $x$ whose residue field $\kappa(x)$ is a purely inseparable extension of $k$. In the first case, non-$\mathbb{A}^1$-contractibility follows from \Cref{S-point}; in the second case, \cite[Theorem~4.1]{achet2017picard} shows that $X$ has a nontrivial Picard group, which again precludes $\AA^1$-contractibility.
\end{example}

Nevertheless, the localization process ensures that $\AA^1$-weak equivalences are always detected Nisnevich locally on the base. This yields the following useful criterion.

\begin{lemma}
\label{lem:A1-cont-local-on-base}
A morphism of smooth $S$-schemes $f:Y\to X$ is an $\AA^1$-weak equivalence in $\mathrm{Spc}_S^{\AA^1}$ if and only if, for every Nisnevich hypercover $U_{\bullet}\to S$, the induced morphism
\[
f_{U_{\bullet}}:Y_{U_{\bullet}}\to X_{U_{\bullet}}
\]
is an $\AA^1$-weak equivalence in $\mathrm{Spc}_{U_{\bullet}}^{\AA^1}$.
\end{lemma}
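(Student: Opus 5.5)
The forward implication is formal. Each $U_n \to S$ in a Nisnevich hypercover is étale, hence a morphism of schemes. So \Cref{cor:pushpull-A1-weak}(1), applied to $u = U_n \to S$, shows that each $f_{U_n}\colon Y_{U_n}\to X_{U_n}$ is an $\AA^1$-weak equivalence in $\mathrm{Spc}_{U_n}^{\AA^1}$. I will interpret the statement ``$f_{U_\bullet}$ is an $\AA^1$-weak equivalence in $\mathrm{Spc}^{\AA^1}_{U_\bullet}$'' levelwise, meaning that $f_{U_n}$ is an $\AA^1$-weak equivalence over $U_n$ for every $n$. With that reading, the direct implication is immediate.

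For the converse, I will use the smoothness of each $u_n\colon U_n\to S$ together with the $(u_{n\sharp},u_n^*)$ adjunction. First, \Cref{cor:pushpull-A1-weak}(2) shows that $f_{U_n}$ is an $\AA^1$-weak equivalence in $\mathrm{Spc}_S^{\AA^1}$ for every $n$, where $Y_{U_n}=Y\times_S U_n$ and $X_{U_n}=X\times_S U_n$ are now regarded as smooth $S$-schemes. Next I form the simplicial objects $Y\times_S U_\bullet$ and $X\times_S U_\bullet$ in $\mathrm{P}(Sm_S)$, and the map between them induced by $f$. Their homotopy colimits (realizations) are compared to $Y$ and $X$ as follows. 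Since $U_\bullet\to S$ is a Nisnevich hypercover and the Nisnevich topology is local, the pullback $Y\times_S U_\bullet\to Y$ is again a Nisnevich hypercover of $Y$, and similarly for $X$. In $L_{\mathrm{Nis}}\mathrm{P}(Sm_S)$, and hence after $\AA^1$-localization, we therefore get weak equivalences
\[
\operatorname{hocolim}_{\Delta^{op}} Y\times_S U_\bullet \xrightarrow{\ \sim\ } Y,\qquad \operatorname{hocolim}_{\Delta^{op}} X\times_S U_\bullet \xrightarrow{\ \sim\ } X.
\]
This holds because the Nisnevich localization is exactly the Bousfield localization at hypercovers. It is compatible with $f$ by naturality.

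Finally, a levelwise weak equivalence of simplicial objects in the model category $\mathrm{Spc}_S^{\AA^1}$ induces a weak equivalence on homotopy colimits. Representables are cofibrant, and the diagonal or realization of a levelwise weak equivalence between Reedy cofibrant diagrams is a weak equivalence; I would replace the diagrams Reedy-cofibrantly if needed. So the map on homotopy colimits is an $\AA^1$-weak equivalence, and two-out-of-three in the commutative square then shows that $f$ is an $\AA^1$-weak equivalence. In fact it suffices to take a single Nisnevich cover, viewed as its Čech nerve hypercover.

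The main obstacle I expect is the middle step: checking that base change of a hypercover of $S$ gives a hypercover of $Y$, and that the realization of this hypercover, computed in presheaves on $Sm_S$, really is Nisnevich-locally equivalent to $Y$. The second point amounts to saying that $Y\times_S U_\bullet \to Y$ becomes a weak equivalence after Nisnevich sheafification of homotopy sheaves. To handle it, I would check stalks at Henselian local schemes: over a Henselian local $V\to Y$, the hypercover $U_\bullet\times_S V$ admits sections levelwise in the appropriate sense, so it is contractible. That step is where the definition of the Nisnevich-local model structure is actually used. Reedy cofibrancy issues are routine by comparison.
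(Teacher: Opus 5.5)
Your forward direction is the same as the paper's: apply Proposition~\ref{cor:pushpull-A1-weak}(1) levelwise. The paper's extra remark that the hypercover is itself an $\AA^1$-weak equivalence plays no role there.

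For the converse you take a different route, and it is correct.

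The paper argues through homotopy sheaves. It uses three things:
\begin{enumerate}
\item the $\pi_n^{\AA^1}$ are Nisnevich sheaves;
\item $U_\bullet\to S$ satisfies descent for sheaves, so the isomorphisms $\pi_n^{\AA^1}(f_{U_\bullet})$ glue to isomorphisms $\pi_n^{\AA^1}(f)$;
\item a Whitehead-type detection of $\AA^1$-weak equivalences then concludes.
\end{enumerate}

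You instead stay inside the model category $\mathrm{Spc}_S^{\AA^1}$:
\begin{enumerate}
\item Proposition~\ref{cor:pushpull-A1-weak}(2), i.e.\ the functor $u_{n\sharp}$, turns each $f_{U_n}$ into an $\AA^1$-weak equivalence over $S$.
\item The base-changed hypercovers $Y\times_S U_\bullet\to Y$ and $X\times_S U_\bullet\to X$ are weak equivalences by the very definition of $L_{\mathrm{Nis}}$.
\item Homotopy invariance of $\operatorname{hocolim}_{\Delta^{op}}$ and two-out-of-three finish. Homotopy invariance is legitimate in the localized structure because left Bousfield localization does not change cofibrations.
\end{enumerate}

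What your route buys is that it uses only results already proved in the paper plus the defining property of $L_{\mathrm{Nis}}$. You need no basepoints. You do not need to identify $\pi_n^{\AA^1}(Y_{U_n})$, computed over $U_n$, with the restriction of $\pi_n^{\AA^1}(Y)$; the paper's argument tacitly uses this, via the fact that $u_n^*$ commutes with $\AA^1$-fibrant replacement for \'etale $u_n$. You also never touch the strict $\AA^1$-invariance the paper mentions parenthetically, which the argument does not need and which is not established over an arbitrary base.

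The paper's route is shorter and shows directly that only the level-$0$ cover matters, since isomorphism of sheaves can be tested on one Nisnevich cover. Your closing remark about \v{C}ech nerves recovers the same conclusion. You are given the hypothesis over $U_0$, and every level $U_0\times_S\cdots\times_S U_0$ maps to $U_0$, so Proposition~\ref{cor:pushpull-A1-weak}(1) propagates it to all levels.

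Your stalkwise check at Henselian local schemes is correct but not needed, since $L_{\mathrm{Nis}}$ is defined as the localization at hypercovers.
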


\begin{proof}
Assume that $f:Y\to X$ is an $\AA^1$-weak equivalence in $\Spc_S^{\AA^1}$, and let $U_{\bullet}\to S$ be any Nisnevich hypercover. By Nisnevich localization, any such hypercover is, by construction, an $\AA^1$-weak equivalence in $\Spc_S^{\AA^1}$. Proposition~\ref{cor:pushpull-A1-weak}(1) then implies that the pullback morphism 
\[
f_{U_{\bullet}}:Y_{U_{\bullet}}\to X_{U_{\bullet}}
\]
is an $\AA^1$-weak equivalence in $\Spc_{U_{\bullet}}^{\AA^1}$.
\medskip

Conversely, assume that for every Nisnevich hypercover $U_{\bullet}\to S$, the pullback $f_{U_{\bullet}}$ is an $\AA^1$-weak equivalence in $\Spc_{U_{\bullet}}^{\AA^1}$. The hypercover $U_{\bullet}\to S$ satisfies Nisnevich hyperdescent for Nisnevich sheaves; that is, for every Nisnevich sheaf $\mcal{F}$ on $S$, the natural map $\mcal{F}(S)\to \mathrm{holim}\,\mcal{F}(U_{\bullet})$ is an isomorphism. Since the $\AA^1$-homotopy sheaves $\pi_n^{\AA^1}$ are (strictly $\AA^1$-invariant) Nisnevich sheaves, it follows by assumption that $f$ induces an isomorphism
\[
\pi_n^{\AA^1}(f_{U_{\bullet}}): \pi_n^{\AA^1}(Y_{U_{\bullet}})\to \pi_n^{\AA^1}(X_{U_{\bullet}}).
\]
Combining this with hyperdescent yields an isomorphism 
\[
\pi_n^{\AA^1}(f): \pi_n^{\AA^1}(Y)\to \pi_n^{\AA^1}(X),
\]
and hence $f$ is an $\AA^1$-weak equivalence in $\Spc_S^{\AA^1}$.
\end{proof}

\subsubsection{Factorization and base change for $\AA^1$-contractible smooth schemes}

We now turn to the behavior of $\AA^1$-contractible smooth schemes under base change and composition.  
We begin with the basic observation that $\AA^1$-contractibility is preserved under arbitrary base change.

\begin{prop}\label{pullbackfunctor}
Let $f:X\rightarrow S$ be a smooth $\AA^1$-contractible $S$-scheme.  
Then for every morphism of schemes $u:T\rightarrow S$, the base change 
$f_T:X_T \rightarrow  T$
is a smooth $\AA^1$-contractible $T$-scheme.
\end{prop}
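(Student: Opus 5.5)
This follows directly from Proposition~\ref{cor:pushpull-A1-weak}(1), applied to the structure morphism itself. The plan is to view $f\colon X\to S$ as a morphism of smooth $S$-schemes, with source $X$ and target the final object $S$, where $S$ is regarded as a smooth $S$-scheme via $\mathrm{id}_S$. By Definition~\ref{defn:A1-contractibility}(3), saying that $X$ is $\AA^1$-contractible over $S$ means precisely that this morphism $f\colon X\to S$ is an $\AA^1$-weak equivalence in $\mathrm{Spc}_S^{\AA^1}$.

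First, I will record that $f_T$ is again a smooth $T$-scheme. Smoothness and finite presentation are stable under arbitrary base change, so $f_T\colon X_T=X\times_S T\to T$ is a smooth morphism of finite presentation. Hence $f_T$ is an object of $Sm_T$, in accordance with the conventions of the paper.

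Second, I will apply Proposition~\ref{cor:pushpull-A1-weak}(1) to the morphism $f\colon X\to S$ of smooth $S$-schemes and to the given $u\colon T\to S$. This yields that the base-changed morphism $X_T\to S_T$ is an $\AA^1$-weak equivalence in $\mathrm{Spc}_T^{\AA^1}$. Since $S_T=S\times_S T\cong T$ canonically, and under this identification the base change of $f$ is exactly $f_T$, it follows that $f_T$ is an $\AA^1$-weak equivalence in $\mathrm{Spc}_T^{\AA^1}$. That is, $X_T$ is $\AA^1$-contractible over $T$.

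There is essentially no obstacle. The only point deserving a word is that $u^*$ sends the representable presheaf of the final object $S$ to that of $T$, which is the final object of $Sm_T$. This is already covered by the observation in the proof of Proposition~\ref{cor:pushpull-A1-weak} that $u^*$ sends representables $X$ to $X_T$, together with the canonical isomorphism $S\times_S T\cong T$.
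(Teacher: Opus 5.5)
Your proposal is correct and is the paper's argument: the paper's proof is the one-line remark that the statement follows immediately from Proposition~\ref{cor:pushpull-A1-weak}(1). Your proof applies that result to $f\colon X\to S$, viewed as a morphism of smooth $S$-schemes with target $S$, and uses $S\times_S T\cong T$, which is exactly the intended reading.
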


\begin{proof} 
This is an immediate consequence of \Cref{cor:pushpull-A1-weak}(1). 
\end{proof}

\begin{example}\label{basechange:imperfect}
Let $X$ be a smooth $\AA^1$-contractible scheme over a field $k$.  
For \emph{every} field extension $k\subset k'$, the base change
\[
X_{k'}:=X\times_{\Spec(k)}\Spec(k')
\]
is $\AA^1$-contractible over $k'$.
\end{example}

\smallskip

\begin{example}\label{basechange:closedimm}
Let $i:Z\hookrightarrow S$ be a closed immersion.  
If $f:X\rightarrow S$ is an $\AA^1$-contractible $S$-scheme, then 
\[
\mathrm{pr}_2:X\times_S Z\rightarrow Z
\]
is an $\AA^1$-contractible $Z$-scheme.
\end{example}

The following structural criterion plays a fundamental role in what follows. In the stable $\AA^1$-homotopy category $\mathrm{SH}(S)$ of a noetherian scheme $S$, a smooth morphism $f:X \rightarrow S$ is a stable $\AA^1$-weak equivalence if and only if all its fibers are stably $\AA^1$-contractible over their respective residue fields; see, for example, \cite[Proposition B.3]{bachmann2021norms}.  
A recent observation made in \cite[Proposition~2.1]{ABEH-real-etale} shows that this fiberwise criterion already holds at the unstable level:

\begin{prop}\label{cor:A1-cont-fibers}
Let $S$ be a qcqs scheme of finite Krull dimension. Then, for a smooth morphism $f:X \rightarrow S$, the following are equivalent:
\begin{enumerate}
    \item $f$ is an $\AA^1$-weak equivalence in $\mathrm{Spc}_S^{\AA^1}$;
    \item for every point $s\in S$ with residue field $\kappa(s)$, the fiber $X_s:=X\times_S\Spec(\kappa(s))$ is a smooth $\AA^1$-contractible $\kappa(s)$-scheme.
\end{enumerate}
\end{prop}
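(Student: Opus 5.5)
The direction (1)$\Rightarrow$(2) is immediate. Apply \Cref{pullbackfunctor} to the morphisms $u_s\colon \Spec(\kappa(s))\to S$: the base change $X_s\to\Spec(\kappa(s))$ of an $\AA^1$-contractible $S$-scheme is again $\AA^1$-contractible.

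For the converse I would argue by noetherian-style induction on $\dim S$, using a gluing (localization) argument along a closed--open decomposition of the base, in the spirit of \cite{MV99}. The key tool is the Morel--Voevodsky gluing theorem. For a closed immersion $i\colon Z\hookrightarrow S$ with open complement $j\colon U\hookrightarrow S$, the pair of functors $(i^*,j^*)$ is jointly conservative on $\mathcal{H}(S)$. Concretely, a map $\mathcal{F}\to\mathcal{G}$ of motivic spaces over $S$ is an equivalence once $j^*$ and $i^*$ of it are. This holds because of the cofiber sequence $j_\sharp j^*\mathcal{F}\to\mathcal{F}\to i_*i^*\mathcal{F}$. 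Unstably one needs the pushout form
\[
\mathcal{F}\simeq \mathcal{F}\amalg_{j_\sharp j^*\mathcal{F}} i_*i^*\mathcal{F}
\]
for pointed objects over $S$, valid since $i_*$ preserves weak equivalences (MV99, Thm.~3.2.21). Here $f\colon X\to S$ is a map to the terminal object, so I would work with it directly. The fact that $i^*X=X_Z$ and $j^*X=X_U$ are representable follows from the description of $u^*$ on representables used in \Cref{cor:pushpull-A1-weak}.

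With this in hand, the plan for (2)$\Rightarrow$(1) has three steps.

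\begin{enumerate}
\item Reduce to the case where $S$ is local. Being an $\AA^1$-weak equivalence is Nisnevich local (\Cref{lem:A1-cont-local-on-base}). Moreover $\AA^1$-homotopy sheaves are detected on stalks, i.e.\ on henselizations of local rings of smooth $S$-schemes. Combining these, one reduces to $S$ local, and in fact henselian local after passing to stalks, by compatibility of $\mathcal{H}$ with filtered limits of bases with affine transition maps.
\item Run the induction on $\dim S$. For $S$ local with closed point $s$, set $U=S\setminus\{s\}$, which has smaller dimension and whose points all satisfy (2). By induction and the local reduction, $X_U\to U$ is contractible. The closed piece $X_s\to s$ is contractible by hypothesis, and \Cref{cor:pushpull-A1-weak}(1) is compatible with the reduced structure. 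Gluing then gives contractibility over $S$.
\item Handle the base case. In dimension $0$, the local scheme has a single point, but $S$ may be non-reduced. Here I would use that $\mathcal{H}(S)\simeq\mathcal{H}(S_{\mathrm{red}})$ via $i^*$ for the nilimmersion, which is the gluing theorem with $U=\emptyset$.
\end{enumerate}

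The main obstacle is justifying the unstable gluing step for a general qcqs base that need not be noetherian, together with the continuity reduction to local schemes. Finite Krull dimension is what makes the induction terminate and what lets Nisnevich hypercompleteness hold, which is needed in \Cref{lem:A1-cont-local-on-base}. I would first try to cite the localization theorem in the form of \cite{hoyois2017six}, which is valid for qcqs bases. I would then apply it to the stratification of $S$ by dimension, rather than reducing to local schemes, and do a finite induction on the length of the stratification by constructible closed subsets. This avoids the need for limit arguments at each step.
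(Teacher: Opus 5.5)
Your steps 1--3 are the paper's proof. Both argue by induction on $\dim S$, reduce to a local base, glue the closed point $i\colon s\hookrightarrow S$ with the punctured spectrum $j\colon U\hookrightarrow S$ via the Morel--Voevodsky localization theorem, and use $\mathcal{H}(S)\simeq\mathcal{H}(S_{\mathrm{red}})$ in dimension $0$. Your step 1 is more careful than the paper, which cites \Cref{lem:A1-cont-local-on-base} to reduce to a local base. But $\Spec\mathcal{O}_{S,s}$ is a limit of Zariski neighbourhoods, not a Nisnevich cover, so a continuity argument of the kind you mention is genuinely needed.

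Two corrections.

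First, your displayed pushout is garbled, and the cofiber-sequence argument for joint conservativity of $(i^*,j^*)$ only works stably. Unstably, the localization theorem gives a cocartesian square $i_*i^*\mathcal{F}\simeq j_\sharp U\amalg_{j_\sharp j^*\mathcal{F}}\mathcal{F}$. Pushouts do not reflect equivalences, so conservativity for arbitrary maps does not follow formally from this square. You also do not need it. By induction $j_\sharp X_U\to j_\sharp U$ is an equivalence, so
\[
X\;\simeq\; j_\sharp U\amalg_{j_\sharp X_U}X\;\simeq\; i_*X_s\;\simeq\; i_*\Spec\kappa(s)=S,
\]
using that $i_*$ preserves $\AA^1$-weak equivalences and terminal objects. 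This is exactly how the paper concludes.

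Second, the stratification alternative in your last paragraph does not eliminate limit arguments. A stratum of positive dimension has infinitely many points. Knowing that its fibers are contractible says nothing about contractibility over the stratum unless you pass to local rings, or spread out from generic points to open neighbourhoods. Both of these are continuity statements for $\mathcal{H}$. Moreover, for non-noetherian $S$ there is no noetherian induction, and no constructible dimension strata, to run such an argument on. Keep the reduction to local schemes.
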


\begin{proof}
Since our hypotheses differ slightly from those in \cite[Proposition~2.1]{ABEH-real-etale}, we sketch the argument. The implication $(1)\Rightarrow(2)$ is immediate from \Cref{pullbackfunctor}. For the converse, we argue by induction on $\dim S$. The case $\dim S=0$ reduces to the observation that a smooth morphism 
$X\to S$, where $S$ is the spectrum of a local Artinian ring, is an $\AA^1$-weak equivalence if and only if the pullback $X\times_{S}S_{\mathrm{red}}\to 
S_{\mathrm{red}}$ by the inclusion $i:S_{\mathrm{red}}\to S$ is an $\AA^1$-weak equivalence. This follows from the Morel–Voevodsky localization theorem \cite[Theorem 3.2.21]{MV99}, which asserts that the induced pullback functor $i^\ast\colon \mcal{H}(S)\to \mcal{H}(S_{\mathrm{red}})$ is an equivalence.
\medskip

Now assume that the assertion holds for all schemes of dimension $\le d$, and let $\dim S=d+1$. By \Cref{lem:A1-cont-local-on-base}, it suffices to treat the case where $S=\Spec R$ is local. Let $i:s\hookrightarrow S$ be the closed point of $S$ and let $j:U=S\setminus\{s\}\hookrightarrow S$ be the complementary open immersion. Since $\dim U<\dim S$, the morphism $f_U:X_U\rightarrow U$ is an $\AA^1$-weak equivalence in $\mathrm{Spc}_U^{\AA^1}$ by the induction hypothesis. 
\medskip

Consider the following commutative square in $\mathrm{Spc}_S^{\AA^1}$
\[
\begin{tikzcd}
  j_\sharp X_U \arrow[r] \arrow[d,swap ,"j_\sharp f_U"] 
    & X \arrow[d] \arrow[ddr, bend left=20]\\
  j_\sharp U \arrow[r]  \arrow[drr, bend right=20]
    & j_\sharp U\cup_{j_\sharp X_U} X \arrow[dr] \\  & & i_* X_s.
\end{tikzcd}
\]
in which the upper left square is homotopy co-cartesian.
Since $f_U:X_U\to U$ is an $\mathbb{A}^1$-weak equivalence in $\mathrm{Spc}_U^{\AA^1}$ and $j_\sharp$ is a left Quillen functor, $j_{\sharp}f_U:j_\sharp X_U\to j_\sharp U$ is an $\mathbb{A}^1$-weak equivalence in $\mathrm{Spc}_S^{\AA^1}$. It follows in turn that the map
\[
X\to j_\sharp U\cup_{j_\sharp X_U}X
\]
is an $\AA^1$-weak equivalence. On the other hand, the proof of \cite[Theorem~2.21]{MV99} shows that the natural map 
\[
j_\sharp U\cup_{j_\sharp X_U}X \to  i_* X_s
\]
is also an $\AA^1$-weak equivalence in $\mathrm{Spc}_S^{\AA^1}$. Thus the composite $X\to i_*X_s$ is an $\AA^1$-weak equivalence in $\mathrm{Spc}_S^{\AA^1}$. Since, by assumption, $X_s$ is $\mathbb{A}^1$-contractible in $\mathrm{Spc}_s^{\AA^1}$, the sheaf $i_*X_s$ is $\mathbb{A}^1$-contractible in $\mathrm{Spc}_S^{\AA^1}$, and the result follows.
\end{proof}

\begin{example} \label{thm:Higherdim-Families}
Let $k$ be a perfect field, let $m\geq 0$ and $n\geq 4$ be integers, and let $X$ be the affine variety in $\mathbb{A}^{n-3}_k\times_k \mathbb{A}^{m+4}_k=\mathrm{Spec}(k[a_2,\ldots, a_{n-2}][x_0,\ldots, x_{m},y,z,t])$ defined by the equation 
\[
\underline{x}^ny+z^2+t^3+x_0\bigl(1+\underline{x} +\sum_{i=2}^{n-2} a_i \underline{x}^i\bigr)=0,
\]
where $\underline{x}=\prod_{i=0}^m x_i$. The morphism $f:=\mathrm{pr}_1|_X:X\to S:=\mathbb{A}^{n-3}_k$ is smooth of relative dimension $n-3$, and, by \cite{dubouloz2025,Krishna25}, its fibers $X_s$ over all points $s$ of $S$, closed or not, are $\AA^1$-contractible over the corresponding residue fields $\kappa(s)$ (but not isomorphic to $\AA^{n-3}_{\kappa(s)}$). 

By \Cref{cor:A1-cont-fibers}, $f:X\to S$ is therefore an $\AA^1$-weak equivalence in $\mathrm{Spc}_S^{\AA^1}$. Since $S=\AA^{n-3}_k$ is $\AA^1$-contractible over $k$, it follows from \Cref{lem:3-from-2} that $X$ is an $\AA^1$-contractible smooth $k$-variety of dimension $n+m$. It is not known whether $X$ is isomorphic to $\AA^{n+m}_k$ as a $k$-variety. 
\end{example}

\section{Relative \texorpdfstring{$\AA^1$}{A1}-contractibility of \texorpdfstring{$\AA^n$}{An}-fiber spaces} \label{Asanuma:stabletheorem}

Flat morphisms of finite presentation with fibers isomorphic to affine spaces have long occupied a central place in affine algebraic geometry, notably in connection with the Zariski Cancellation Problem and the Dolgachev--Weisfeiler Problem; see, for instance, \cite{Kr96} for an overview. From the motivic viewpoint, such morphisms furnish a particularly rich source of examples of relatively $\AA^1$-contractible schemes. We begin by recalling the relevant terminology.

\begin{defn}\label{def:A1-fib-space}
An \emph{$\AA^n$-fiber space} over a scheme $S$ is a smooth morphism of finite presentation $f:X\to S$ such that, for every point $s\in S$, the fiber $X_s=X\times_S \Spec(\kappa(s))$ is isomorphic to $\AA^n_{\kappa(s)}$.
\end{defn}

\begin{defn}
An $\AA^n$-fiber space $f:X\to S$ is called a \emph{locally trivial $\AA^n$-bundle} in the Zariski (resp.\ Nisnevich, resp.\ \'etale) topology if every point $s\in S$ admits a Zariski (resp.\ Nisnevich, resp.\ \'etale) neighborhood $U$ such that
\[
U\times_S X \cong U\times_S \AA^n_S
\]
as schemes over $U$.
\end{defn}

\begin{remark}
This notion coincides with what is often called an "$\AA^n$-fibration" or "affine fibration" in the classical literature; see, e.g., \cite{sathaye1983polynomial, asanuma1987polynomial, BhD92, DF10}. Other authors, such as \cite{KM78, KW85}, adopt weaker hypotheses, requiring only that the general closed fibers be isomorphic to $\AA^n$ and that all fibers be geometrically integral. Our terminology is chosen to avoid confusion with topological or model-categorical notions of fibration.
\end{remark}

The following statement is an immediate consequence of \Cref{cor:A1-cont-fibers}.

\begin{theorem} \label{thm:glue-A1-fiber}
An $\AA^n$-fiber space $f:X\to S$ over a qcqs scheme $S$ of finite Krull dimension is an $\AA^1$-weak equivalence in $\mathrm{Spc}_S^{\AA^1}$.
\end{theorem}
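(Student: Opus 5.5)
The plan is to deduce the statement directly from the fiberwise criterion of \Cref{cor:A1-cont-fibers}. By \Cref{def:A1-fib-space}, an $\AA^n$-fiber space $f\colon X\to S$ is a smooth morphism of finite presentation. So $X$ is an object of $Sm_S$, and $f$ is a smooth morphism over a qcqs base of finite Krull dimension. These are exactly the hypotheses of \Cref{cor:A1-cont-fibers}. It therefore suffices to verify condition (2) there: for every point $s\in S$, the fiber $X_s$ is $\AA^1$-contractible over $\kappa(s)$.

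By definition of an $\AA^n$-fiber space, each fiber $X_s$ is isomorphic to $\AA^n_{\kappa(s)}$ as a $\kappa(s)$-scheme. It remains to see that $\AA^n_k\to\Spec(k)$ is an $\AA^1$-weak equivalence in $\mathrm{Spc}_k^{\AA^1}$ for any field $k$. I would argue this by induction on $n$. For $n=1$, the projection $\AA^1_k\to\Spec(k)$ is one of the generating maps inverted by $L_{\AA^1}$. For the inductive step, the projection $\AA^n_k=\AA^{n-1}_k\times_k\AA^1_k\to\AA^{n-1}_k$ is again of the form $X\times\AA^1\to X$ with $X\in Sm_k$, hence is an $\AA^1$-weak equivalence. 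Composing with the inductive hypothesis via two-out-of-three gives the claim. Alternatively, one can note that $\AA^n_S\to S$ is a trivial bundle with contractible fibers, as in \Cref{eg:A1-contractibles}, or use \Cref{lem:3-from-2} with smooth projections.

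Combining these two steps, condition (2) of \Cref{cor:A1-cont-fibers} holds, so $f$ is an $\AA^1$-weak equivalence in $\mathrm{Spc}_S^{\AA^1}$. There is essentially no obstacle, since all the substance lies in \Cref{cor:A1-cont-fibers}. The only point needing care is the bookkeeping: \Cref{cor:A1-cont-fibers} is applied to all points of $S$, including non-closed ones, and the $\AA^n$-fiber space hypothesis is imposed at every point $s\in S$, so the fibers over generic points are covered as well.
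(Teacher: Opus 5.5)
Your proposal is correct and follows the paper's approach exactly: the paper simply calls this theorem an immediate consequence of \Cref{cor:A1-cont-fibers}, since every fiber $X_s\cong\AA^n_{\kappa(s)}$ is $\AA^1$-contractible over $\kappa(s)$. Your inductive check that $\AA^n_k\to\Spec(k)$ is an $\AA^1$-weak equivalence fills in the routine detail the paper leaves implicit; the case $n=0$ is trivial, since $\AA^0_k=\Spec(k)$.
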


The next result, based on Asanuma’s description of $\AA^n$-fiber spaces between noetherian affine schemes \cite[Theorem 4.5]{asanuma82} and \cite[Theorem 3.4]{asanuma1987polynomial}, provides a more precise structural picture, making the relative $\AA^1$-contractibility of such fiber spaces geometrically explicit.

\begin{theorem}\label{thm:An-fiber-space-contractible}
Let $f:X\to S$ be an $\AA^n$-fiber space over a locally noetherian scheme $S$. Assume that either $f$ is affine, or that $f$ is separated and $S$ is regular. Then for every noetherian Zariski affine open subset $U\subset S$ there exist an integer $m$ and a commutative diagram
\[
\xymatrix{ V \ar[d]_{\rho} & \AA^m_U \ar[l]_{\pi} \ar[d]^{\mathrm{pr}_U} \\ X_U \ar[r]^{f_U}  & U}
\]
in which $\rho:V\to X_U$ is a Zariski locally trivial torsor under a vector bundle $F\to X_U$ with affine total space, and $\pi:\AA^m_U\to V$ is a vector bundle.
\end{theorem}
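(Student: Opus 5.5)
The plan is to reduce to Asanuma's structure theorem for quasi-polynomial algebras and then realize its algebraic conclusion geometrically, with the Jouanolou--Thomason trick supplying the affine torsor $V$. Fix a noetherian affine open $U=\Spec A\subset S$. In the affine case $X_U=\Spec B$ with $B$ a finitely presented flat $A$-algebra whose fibers are polynomial rings. In the second case ($f$ separated, $S$ regular), I first show that $X_U$ is affine, or at least that it has the properties needed below. Then $X_U$ is regular and separated, hence has an ample family of line bundles, and the argument can run on an affine Jouanolou device of $X_U$ instead of on $X_U$ itself. Asanuma's theorem \cite[Theorem 4.5]{asanuma82}, \cite[Theorem 3.4]{asanuma1987polynomial} then says that $B$ is a retract-type "stably polynomial" algebra: $\Omega_{B/A}$ is a projective $B$-module, and there are an integer $m$ and a finitely generated projective $B$-module $Q$ with
\[
\Sym_B(Q)\cong A[x_1,\dots,x_m]
\]
as $A$-algebras, i.e.\ $B^{[\ast]}$ is an $A$-polynomial ring after adjoining a projective module.

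With this in hand the plan is as follows.

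\textbf{Step 1 (torsor $V$).} Take $\rho:V\to X_U$ to be a Jouanolou--Thomason affine torsor. Concretely, I would use the torsor under the vector bundle $F$ dual to the kernel of $\mathcal{O}\to\ldots$, namely the torsor of splittings of a surjection $\mathcal{O}^{N}\to\mathcal{L}$ or of the Euler-type sequence coming from an embedding of $X_U$ into projective space over $U$. This $V$ is affine and Zariski locally trivial over $X_U$. When $X_U$ is already affine one may simply take $V=X_U$ and $F=0$.

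\textbf{Step 2 (vector bundle $\pi$).} On the affine scheme $V=\Spec C$, the pullback of Asanuma's data gives a projective $C$-module $P$ with $\Sym_C(P)\cong A[x_1,\dots,x_m]$. Setting $\pi:\AA^m_U=\Spec \Sym_C(P)\to V$ yields a vector bundle, and the square commutes because all the identifications are $A$-algebra maps. Since $V\to X_U$ is an $\AA^1$-weak equivalence, Asanuma's criterion still applies to $V$: its fibers over $U$ are affine-space torsors, hence still "quasi-polynomial" in Asanuma's sense. To handle this I would either invoke the version of Asanuma's theorem for smooth algebras with $\AA^1$-contractible-type fibers, or first apply Asanuma to $B$ and then pull back along $\rho$, absorbing the projective module defining $\rho$ into the enlarged $P$ using Serre's splitting on the affine $V$.

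\textbf{Main obstacle.} The hardest step is the non-affine case. Here I must show that a separated $\AA^n$-fiber space over a regular base is, after the Jouanolou replacement, covered by Asanuma's hypotheses. Asanuma's theorem is stated for rings, so I would first prove that $X_U$ is quasi-affine or quasi-projective over $U$. For that I would try the ample family of divisors on the regular separated scheme $X_U$, or argue that a regular scheme of this kind with $\Pic$ of the fibers trivial is affine by Sathaye/Bass--Connell--Wright-type local arguments. Only after this reduction can the algebraic input be transported.
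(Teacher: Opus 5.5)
Your overall architecture (an affine Jouanolou--Thomason torsor $\rho:V\to X_U$, then Asanuma's theorem on an affine scheme) is the right one, and your affine case (take $V=X_U$) is fine. The gap is in the non-affine case, exactly where Asanuma must be applied to $V$.

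Asanuma's theorem needs $V=\Spec C\to U$ to be an $\AA^N$-fiber space, i.e.\ its fibers must be honest affine spaces. The fact that $V\to X_U$ is an $\AA^1$-weak equivalence gives nothing of the sort, and there is no version of Asanuma's theorem for ``$\AA^1$-contractible-type'' fibers to fall back on. The missing argument is short. For $s\in U$, the fiber map $\rho_s:V_s\to X_s\cong\AA^n_{\kappa(s)}$ is a torsor under the vector bundle $E_s$ over an affine scheme. It is therefore trivial, since its class lies in $H^1$ of a quasi-coherent sheaf on an affine scheme. Moreover $E_s$ is trivial by Quillen--Suslin, so $V_s\cong\AA^{n+r}_{\kappa(s)}$. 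Hence $h=f_U\circ\rho:V\to U$ is a smooth, affine, finitely presented $\AA^{n+r}$-fiber space. Applying the affine case to $h$ (not to $f_U$) directly produces the vector bundle $\pi:\AA^m_U\to V$.

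Your proposed fallback of pulling back Asanuma's data does not work. When $X_U$ is not affine there is no ring $B$ to apply Asanuma to. Even when there is, $\Sym_C(Q\otimes_B C)\cong A[x_1,\dots,x_m]\otimes_B C$ is the coordinate ring of $\AA^m_U\times_{X_U}V$, which is a torsor over $\AA^m_U$, not $\AA^m_U$ itself.

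The ``main obstacle'' you identify is also a red herring. You should not try to show $X_U$ is affine, because this is false in general: \Cref{eg:notZLT} is a separated $\AA^2$-fiber space over the regular affine base $\AA^2_{\mathbb{Z}}$ with strictly quasi-affine total space. Quasi-projectivity is not needed either. $X_U$ is noetherian, regular and separated, hence has an ample family of line bundles, which is all the Jouanolou--Thomason lemma requires. Once $V$ is affine, the only thing left to check is the fiberwise statement above.
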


\begin{proof}
First assume that $f$ is affine. For every noetherian affine open subset $U\subset S$, the scheme $X_U=f^{-1}(U)$ is affine and $f_U:X_U\to U$ is an $\AA^n$-fiber space. Since $f_U$ is smooth, the sheaf of relative Kähler differentials $\Omega_{X_U/U}$ is locally free. By \cite[Theorem 4.5]{asanuma82}, there exists a vector bundle $\pi:W\to X_U$ whose total space $W$ is isomorphic over $U$ to $\AA^m_U$ for some $m\geq 0$. The assertion follows by taking $\rho:V\to X_U$ to be the identity.
\medskip

Now assume that $f$ is separated and that $S$ is regular, and let $U=\Spec(A)$ be a Zariski affine open subset of $S$. Since $A$ is regular and $f_U:X_U\to U$ is a smooth separated morphism of finite presentation, the Jouanolou--Thomason homotopy lemma (see, e.g., \cite[Lemma~3.1.4]{asok2021A1}) yields a Zariski locally trivial torsor
\[
\rho:V\to X_U
\]
under a vector bundle $p:E\to X_U$ of rank $r$ with affine total space $V$. The composite $h:=f_U\circ \rho:V\to U$ is a smooth affine morphism of finite presentation.
\medskip

For each point $s\in U$, the induced torsor
\[
\rho_s:V_s\to X_s
\]
is trivial since $X_s$ is affine. Moreover, by the Quillen--Suslin theorem 
\cite{Qui76, zbMATH03822002}, the vector bundle $E_s$ on $X_s\cong \AA^n_{\kappa(s)}$ is trivial. 
It follows that $\rho_s$ is isomorphic to the trivial $\AA^r$-bundle
\[
\mathrm{pr}_2:\AA^r_{\kappa(s)}\times_{\kappa(s)}\AA^n_{\kappa(s)}\to \AA^n_{\kappa(s)},
\]
and hence that $h:V\to U$ is an affine $\AA^{n+r}$-fiber space. The conclusion now follows from the affine case.
\end{proof}

The following example illustrates \Cref{thm:An-fiber-space-contractible} for an $\AA^2$-fiber space $f:X\to S=\AA^2$ with strictly quasi-affine total space.

\begin{example}\label{eg:notZLT}
Let $S=\Spec(\mathbb{Z}[x,y])=\AA^2_{\mathbb{Z}}$, and consider the smooth affine fourfold
\[
Q:=\{xv-yu+z(z-1)=0\}\subset S\times_{\mathbb{Z}}\AA^3_{\mathbb{Z}}
=\Spec(\mathbb{Z}[x,y][z,u,v]).
\]
The projection $\mathrm{pr}_1|_Q:Q\to S$ is smooth and restricts over $S\setminus\{(0,0)\}$ to a Zariski locally trivial $\AA^2$-bundle. The fiber over $(0,0)$ is the disjoint union of two copies of $\AA^2_{\mathbb{Z}}$,
\[
F_0=\{x=y=z=0\},\qquad F_1=\{x=y=0,z=1\}.
\]
Setting $X:=Q\setminus F_1$, the morphism $f=\mathrm{pr}_1|_X:X\to S$ is an $\AA^2$-fiber space whose total space is strictly quasi-affine. In particular, $f$ is not locally trivial in any of the Zariski, Nisnevich, \'etale, fppf, or fpqc topologies. It was observed in \cite{asok2007unipotent} that $X$ is $\AA^1$-contractible over $\Spec(\mathbb{Z})$. A similar argument shows that $f:X\to S$ is an $\AA^1$-weak equivalence in $\mathrm{Spc}_S^{\AA^1}$. Namely, by \cite[\S3]{winkelmann1990free}, there exists a commutative diagram
\[
\xymatrix{ & V=\AA^3_S=\AA^5_{\mathbb{Z}} \ar[dr]^{\mathrm{pr}_S} \ar[dl]_{\rho}\\ X \ar[rr]^{f} \ar[dr] && S=\AA^2_{\mathbb{Z}} \ar[dl] \\ & \Spec(\mathbb{Z}) }
\]
in which $\rho:V=\Spec(k[x,y][t_1,t_2,t_3])\to X$ is the nontrivial $\mathbb{G}_{a,X}$-torsor defined by
\[
(t_1,t_2,t_3)\mapsto (u,v,z)
=\bigl(yt_3-(xt_2-yt_1-1)t_2,\; xt_3-(xt_2-yt_1-1)t_1,\; xt_2-yt_1\bigr).
\]
\end{example}

\begin{remark} \label{rem:Higher-dim}
Higher-dimensional analogues of \Cref{eg:notZLT} were constructed in \cite{asok2007unipotent,ADF2017smooth,ADO23}, providing numerous examples of smooth strictly quasi-affine $\AA^1$-contractible schemes over an affine base $B$ that admit the structure of $\AA^n$-fiber spaces
\[
f_{n,\mathbf{m}}=\mathrm{pr}_{x_1,\ldots,x_n}:X_{n,\mathbf{m}}
=\left\{\sum_{i=1}^n x_i^{m_i}y_i+z(z-1)=0\right\}
\setminus\{x_1=\cdots=x_n=z-1=0\} \to \AA^n_B.
\]
When $B$ is regular, \Cref{thm:An-fiber-space-contractible} implies that $f_{n,\mathbf{m}}$ is an $\AA^1$-weak equivalence in $\mathrm{Spc}_{\AA^n_B}^{\AA^1}$. Since $\AA^n_B$ is $\AA^1$-contractible over $B$, this yields an alternative proof of the $\AA^1$-contractibility of $X_{n,\mathbf{m}}$ over $B$.
\end{remark}

The following two examples illustrate \Cref{thm:An-fiber-space-contractible} for affine $\AA^1$-fiber spaces over singular curves that are not Zariski locally trivial $\AA^1$-bundles.

\begin{example}\label{ex:KambWright}
(See \cite[\S 3.4]{KW85}.) Let $k$ be a field of characteristic zero and let
\[
S=\{x^p-y^q=0\}\subset \AA^2_k=\Spec(k[x,y]),
\]
where $p,q\geq 2$ are relatively prime integers. This defines a rational cuspidal plane curve with normalization $\nu_S:\AA^1_k=\Spec(k[t])\to S$ given by $t\mapsto (t^q,t^p)$. Let $f:X\to S$ be the restriction of the projection $\mathrm{pr}_S:\PP^1_S\to S$ to the complement $X$ of the Zariski closure $Z\cong \AA^1_k$ of the graph $\Gamma\subset \AA^1_S$ of $\nu_S$.
\medskip

By construction, $f:X\to S$ is an affine $\AA^1$-fiber space whose restriction over the complement of the unique singular point $\{(0,0)\}$ of $S$ is a trivial $\AA^1$-bundle. However, $f$ is not a Zariski locally trivial $\AA^1$-bundle: there is no Zariski open neighborhood $U$ of $(0,0)$ over which $f_U:X_U\to U$ is trivial. Indeed, if such a trivialization existed, then the inclusion of $U$-schemes $\AA^1_U\cong X|_U\hookrightarrow \PP^1_U$ would extend to a morphism $\PP^1_U\to \PP^1_U$ sending the singular curve $U=\PP^1_U\setminus \AA^1_U$ birationally onto its normalization $Z|_U=\nu^{-1}(U)$, which is impossible.
\medskip

On the other hand, since $\mathcal{O}_{\PP^1_S}(1)|_X\cong \mathcal{O}_X$ and the restriction to $X$ of the Euler exact sequence
\[
0\to \Omega_{\PP^1_S/S}\otimes \mathcal{O}_{\PP^1_S}(1)\to \mathcal{O}_{\PP^1_S}^{\oplus 2}\to \mathcal{O}_{\PP^1_S}(1)\to 0
\]
splits (as $X$ is affine), it follows that $\Omega_{X/S}\oplus \mathcal{O}_X\cong \mathcal{O}_X^{\oplus 2}$. By \cite[Corollary 4.7]{asanuma82}, this implies that $V=X\times_S\AA^1_S$ is isomorphic to $\AA^2_S$ as a scheme over $S$.
\end{example}

\begin{example}\label{ex:Hamann-example}
(See \cite{Ha75}.) Let $k$ be a field of characteristic $p>0$ and let
\[
S=\{x^{p+1}-y^p=0\}\subset \AA^2_k=\Spec(k[x,y])
\]
be a singular curve with normalization $\nu_S:\AA^1_k=\Spec(k[t])\to S$ given by $t\mapsto (t^p,t^{p+1})$. Let
\[
X=\{u^p+xv^p-v=0\}\subset S\times_k \Spec(k[u,v])
\]
be the non-normal affine surface with singular locus $\{x=y=v-u^p=0\}\cong \AA^1_k$ and normalization
\[
\nu_X:\AA^2_k=\Spec(k[t,w])\to X,\quad (t,w)\mapsto (x,y,u,v)=(t^p,t^{p+1},w-tw^p,w^p).
\]
The morphism $f=\mathrm{pr}_1:X\to S$ is an affine $\AA^1$-fiber space whose restriction over $S^\star:=S\setminus\{(0,0)\}\cong \Spec(k[t^{\pm 1}])$ is the trivial $\AA^1$-bundle $S^\star\times_k\Spec(k[w])$, and whose fiber over $(0,0)$ is $\AA^1_k=\Spec(k[u])$. This $\AA^1$-fiber space is not Zariski locally trivial: otherwise there would exist an affine open neighborhood $U$ of $(0,0)$ such that $f^{-1}(U)\cong \AA^1_U$, in which case the rational function $w=u+x^{-1}yv$ on $X$ would be regular on $f^{-1}(U)$, hence on all of $X$, which is not the case.

On the other hand, the $S$-morphism
\[
\begin{array}{ccc}
\psi\colon V=X\times_{S}\Spec(\mathcal{O}_{S}[z]) & \to & \AA^2_{S}=\Spec(\mathcal{O}_{S}[U,V])\\
(u,v,z) & \mapsto & (U,V)=(u-yz^{p},\,v+z+xz^{p})
\end{array}
\]
is an isomorphism, with inverse
\[
(U,V)\mapsto (u,v,z)=\bigl(U+y(V-U^p-xV^p),\,U^p+xU^{p^2}+yV^{p^2},\,V-U^p-xV^p\bigr).
\]
\end{example}

We conclude with an illustration of \Cref{thm:An-fiber-space-contractible} for an affine $\AA^2$-fiber space between smooth affine schemes that is not a Zariski locally trivial $\AA^2$-bundle.

\begin{example}\label{eg:Asanuma3F}
Let $k$ be a field of characteristic $p>0$ and let $e,s\geq 2$ be integers such that 
$p^e\nmid sp$ and $sp\nmid p^e$. The morphism
\[
\AA^1_k=\Spec(k[u])\to \AA^2_k=\Spec(k[z,t]),\quad u\mapsto (z,t)=(u+u^{sp},u^{p^e})
\]
is a closed immersion with image the curve $C=\{z^{p^e}-t-t^{sp}=0\}$. Let
\[
X=\{x^my-z^{p^e}+t+t^{sp}=0\}\subset \AA^4_{k}=\Spec(k[x,y,z,t]),
\]
where $m\geq 2$. The projection $f=\mathrm{pr}_x:X\to S=\AA^1_k$ is an affine $\AA^2$-fiber space that restricts over $\AA^1_k\setminus\{0\}$ to the trivial $\AA^2$-bundle $(\AA^1_k\setminus\{0\})\times_k\Spec(k[z,t])$, and whose fiber over $0$ is isomorphic to $C\times_k\Spec(k[y])\cong \AA^2_k$.
By \cite[Theorem 5.1]{asanuma1987polynomial}, $f$ is not a Zariski locally trivial $\AA^2$-bundle. On the other hand, by \cite[Theorem 3.4]{asanuma1987polynomial}, the composition $f\circ \mathrm{pr}_1:V=X\times_S \AA^1_S\to S$ is a trivial $\AA^3$-bundle over $S$.
\end{example}

\section{Relative \texorpdfstring{$\AA^1$}{A1}-contractibility of low dimensional smooth schemes }

In this section we study $\AA^1$-contractibility for smooth schemes of relative dimension at most $2$. 
%

\subsection{\texorpdfstring{$\AA^1$}{A1}-contractibility and \texorpdfstring{$\AA^1$}{A1}-rigidity of \'etale schemes}
\label{sect:0-dim schemes}

We begin with the zero-dimensional case.

\begin{prop}\label{0-dim prop}
An \'etale $S$-scheme $f:X\to S$ is $\AA^1$-contractible if and only if $f$ is an isomorphism. 
\end{prop}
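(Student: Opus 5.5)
The strategy is to show that an étale $S$-scheme is $\AA^1$-rigid, hence fibrant in $\mathrm{Spc}_S^{\AA^1}$, and then use the fact that $\AA^1$-weak equivalences between $\AA^1$-rigid smooth $S$-schemes are isomorphisms. The easy direction is immediate: an isomorphism is an $\AA^1$-weak equivalence, so if $f$ is an isomorphism then $X$ is $\AA^1$-contractible.

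For the converse, assume $f\colon X\to S$ is étale (of finite presentation) and $\AA^1$-contractible. First I will check that $X$ is $\AA^1$-rigid over $S$. Let $U\in Sm_S$. An element of $X(\AA^1_U)$ is an $S$-morphism $\AA^1_U\to X$, equivalently a section of the étale morphism $X_{\AA^1_U}\to \AA^1_U$. The plan is to show that every such section factors uniquely through the projection $\AA^1_U\to U$.

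\begin{itemize}
\item Sections of an étale separated morphism are open and closed immersions. Since $X$ need not be separated, I would instead use the more robust fact that, for $X\to S$ étale, $\mathrm{Hom}_S(T,X)$ is determined by the topological map together with a compatibility, i.e.\ $X$ is a sheaf on the small étale site.
\item Given $g\colon \AA^1_U\to X$, set $g_0 := g\circ 0_U\colon U\to X$, where $0_U$ is the zero section. I will show $g = g_0\circ \mathrm{pr}_1$. The two morphisms $g$ and $g_0\circ\mathrm{pr}_1$ from $\AA^1_U$ to $X$ over $S$ agree along the zero section. Since $X\to S$ is étale, hence unramified, the locus where two such maps agree is open (the diagonal of an unramified morphism is an open immersion).
\item This locus contains the zero section and is stable under the relevant structure, so it meets every fiber of $\AA^1_U\to U$. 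Because those fibers are connected (indeed geometrically connected affine lines), a fiberwise argument over points of $U$ should show that the locus is all of $\AA^1_U$. More precisely, over each point $u\in U$ the two maps $\AA^1_{\kappa(u)}\to X_{s}$ land in a discrete étale scheme over a field, and a connected scheme maps to a single point with a uniquely determined residue-field map.
\end{itemize}

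This gives injectivity; surjectivity is clear. Hence $X$ is $\AA^1$-rigid, so by the Example in \S1 it is fibrant in $\mathrm{Spc}_S^{\AA^1}$. The final step uses that $S$ is also $\AA^1$-rigid (it is the terminal object). An $\AA^1$-weak equivalence $f\colon X\to S$ between $\AA^1$-rigid smooth $S$-schemes is then an isomorphism, by the cited \cite[Corollary~4.1.9]{asok2021A1}.

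Alternatively, one can avoid this corollary. Since $X$ is fibrant, $[S,X]_{\AA^1}=X(S)$, and the inverse of $f$ in $\mathcal{H}(S)$ provides a section $\sigma\in X(S)$. Then $\sigma\circ f\in X(X)$ equals $\mathrm{id}_X$ in $[X,X]_{\AA^1}=X(X)$, so $f$ is an isomorphism.

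The main obstacle is the rigidity step in the non-separated case: showing that the agreement locus, which is open, is actually the whole of $\AA^1_U$. This is where reduced connected fibers and the unramifiedness of $f$ must be combined carefully. I will handle it by reducing, via the open-diagonal property, to checking pointwise on fibers over a field, where it becomes the elementary statement that any morphism from $\AA^1_K$ to an étale $K$-scheme is constant.
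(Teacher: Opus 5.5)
Your proof is correct, but it takes a different route from the paper's.

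\textbf{The paper's route.} The paper argues fiberwise. It base-changes $f$ to each point $s\in S$ using \Cref{cor:pushpull-A1-weak}(1). An $\AA^1$-contractible \'etale $\kappa(s)$-scheme is connected and has a rational point by \Cref{S-point}, hence is $\Spec\kappa(s)$. So $f$ is a surjective, universally injective \'etale morphism, and therefore an isomorphism by the Stacks Project criterion (\'etale plus universally injective implies open immersion).

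\textbf{Your route.} You first prove that an \'etale $S$-scheme is $\AA^1$-rigid over an \emph{arbitrary} base. Since the diagonal of $X\to S$ is an open immersion, the equalizer $E$ of $g$ and $g\circ 0_U\circ\mathrm{pr}_1$ is an open subscheme of $\AA^1_U$. It contains every point, because on each fiber $\AA^1_{\kappa(u)}$ both maps factor through $\Spec\kappa(u)$: a $\kappa(u)$-algebra map from a finite separable extension into $\kappa(u)[t]$ lands in the constants. You then use that $X$ and $S$ are discrete, cofibrant and $\AA^1$-fibrant. Hence $[S,X]_{\AA^1}=X(S)$ and $[X,X]_{\AA^1}=\mathrm{Hom}_S(X,X)$, so the inverse of $f$ in $\mathcal{H}(S)$ is an honest section $\sigma$ with $\sigma\circ f=\mathrm{id}_X$.

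\textbf{Comparison.} Your argument avoids the reduction to residue fields, the Henselian rational-point argument, the step from $\AA^1$-connected to connected, and the universally-injective criterion. It also gives two byproducts:
\begin{itemize}
\item a strict strengthening of \Cref{etale-A1-rigid}, which the paper proves only for integral $S$ via density of the generic fiber and descent;
\item the fact that any $\AA^1$-weak equivalence between \'etale $S$-schemes is an isomorphism.
\end{itemize}
The paper's argument uses only base change along points. This fits the fiberwise theme of the paper and needs no identification of fibrant objects or of the homotopy relation on discrete presheaves.

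\textbf{Minor slips.}
\begin{itemize}
\item The factorization $g=g_0\circ\mathrm{pr}_1$ proves \emph{surjectivity} of $X(U)\to X(\AA^1_U)$. Injectivity is the trivial part, since the zero section gives a retraction. You have the two labels swapped.
\item Openness of $E$ together with connectedness of the fibers would not suffice on its own. The real work is the pointwise check on each fiber, as your final paragraph correctly indicates.
\end{itemize}
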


\begin{proof}
Every isomorphism is an $\AA^1$-weak equivalence. Conversely, suppose that $f:X\to S$ is an $\AA^1$-weak equivalence in $\mathrm{Spc}_S^{\AA^1}$. By \Cref{cor:pushpull-A1-weak}(1), for every point $s\in S$ with residue field $\kappa(s)$, the base change
\[
f_s:X_s=X\times_S \Spec(\kappa(s))\to \Spec(\kappa(s))
\]
is an $\AA^1$-weak equivalence in $\mathrm{Spc}_{\kappa(s)}^{\AA^1}$. Since $f_s$ is \'etale of finite presentation, $X_s$ is a finite disjoint union of spectra of finite separable extensions of $\kappa(s)$. 
\medskip

Because $X_s$ is $\AA^1$-contractible, it is $\AA^1$-connected, hence connected as a topological space, and by \Cref{S-point} it admits a $\kappa(s)$-rational point. It follows that $X_s\simeq \Spec(\kappa(s))$. Therefore $f$ is bijective with trivial residue field extensions, hence universally injective. Since $f$ is \'etale, it is an open immersion by \cite[\href{https://stacks.math.columbia.edu/tag/02LC}{Tag 02LC}]{stacks-project}; being also surjective, it is an isomorphism.
\end{proof}

The following result provides a relative analogue of \cite[Example 2.1.10]{AM11}.

\begin{prop}\label{etale-A1-rigid}
An \'etale morphism $f:X\to S$ over an integral scheme $S$ is $\AA^1$-rigid.
\end{prop}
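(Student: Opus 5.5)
We have to show that for every $U\in Sm_S$, the map $X(U)\to X(\AA^1_U)$ induced by $\mathrm{pr}_1\colon\AA^1_U\to U$ is a bijection. Injectivity is formal: the zero section $\sigma_0\colon U\to\AA^1_U$ satisfies $\mathrm{pr}_1\circ\sigma_0=\mathrm{id}_U$, so precomposition with $\mathrm{pr}_1$ has a left inverse given by precomposition with $\sigma_0$. The content is surjectivity. Given an $S$-morphism $g\colon\AA^1_U\to X$, we must show $g=g\circ\sigma_0\circ\mathrm{pr}_1$. Both maps are $S$-morphisms to the \'etale $S$-scheme $X$, so we will compare them using the graph construction.

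Set $h:=g\circ\sigma_0\circ\mathrm{pr}_1$. Consider the two sections $(\mathrm{id},g)$ and $(\mathrm{id},h)$ of the base change $X_{\AA^1_U}:=X\times_S\AA^1_U\to\AA^1_U$. This base change is \'etale, so by \cite[\href{https://stacks.math.columbia.edu/tag/024T}{Tag 024T}]{stacks-project} each section is an open immersion, and in particular the locus $W\subset\AA^1_U$ where $g$ and $h$ agree is open. Concretely, $W$ is the preimage of the diagonal $\Delta_{X/S}$, which is open because $X\to S$ is unramified.

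Since $\sigma_0$ is a section of $\mathrm{pr}_1$, we have $g\circ\sigma_0=h\circ\sigma_0$, so $W$ contains the zero section $\sigma_0(U)$. The task is now to show $W=\AA^1_U$. We may work on each connected component of $U$ and Zariski locally on $U$, so we may assume $U$ is connected (or irreducible), since $U$ is smooth over $S$. This is where the hypothesis on $S$ enters. Because $S$ is integral and $U$ is smooth, hence flat, over $S$, the scheme $U$ has no embedded components. Moreover, when $S$ is normal, $U$ is normal, so its connected components are irreducible.

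The key step, and the one I expect to be the main obstacle, is upgrading agreement on the zero section to agreement everywhere. The standard fact is that two morphisms from a connected scheme to an \'etale (hence unramified and separated-locally) $S$-scheme that agree at one point with the same residue map agree everywhere. More precisely, the equalizer of two maps into an unramified separated $X$ is open and closed, provided $X\to S$ is separated. Since $X$ need not be separated, I will argue fiberwise instead. For each $u\in U$, the fiber $\AA^1_{\kappa(u)}$ is connected and geometrically reduced. The induced maps $\AA^1_{\kappa(u)}\to X_{s}$, where $s$ is the image of $u$, land in an \'etale $\kappa(s)$-scheme, i.e. a disjoint union of spectra of finite separable field extensions. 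Such a map is constant, determined by a single point and a field embedding, and the restrictions of $g$ and $h$ coincide at the origin. Hence $g$ and $h$ agree set-theoretically and on residue fields at all points. The remaining step is to deduce scheme-theoretic equality. I would do this by showing the graph of $h$ lies in $\Gamma_g$, an open subscheme of $X_{\AA^1_U}$ mapping isomorphically to $\AA^1_U$. Since the two sections agree pointwise, their images coincide as open subsets, and two sections of a morphism with the same open image are equal. Integrality of $S$ is used to ensure that the reduction to connected/irreducible $U$ is harmless. Over a reduced base, pointwise agreement of maps into an \'etale scheme already suffices.
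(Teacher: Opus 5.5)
Your argument is correct, and it takes a different route from the paper's. The paper works only over the generic point $\eta$ of $S$. It invokes \cite[Example~2.1.10]{AM11} for the \'etale $K$-scheme $X_\eta$, spreads the identity $\varphi\circ\mathrm{pr}_1=\varphi\circ\mathrm{pr}_2$ from the generic fiber to all of $\AA^1_U\times_U\AA^1_U$ by density, and then descends $\varphi$ along the fpqc cover $\AA^1_U\to U$.

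You proceed differently in three places:
\begin{enumerate}
\item You compare $g$ directly with $h=g\circ\sigma_0\circ\mathrm{pr}_1$, so no descent is needed.
\item You show that $g$ and $h$ agree on every fiber $\AA^1_{\kappa(u)}$, $u\in U$. The one fact worth writing out is that a $\kappa(s)$-algebra map from a finite extension $L$ into $\kappa(u)[t]$ lands in $\kappa(u)$, because nonconstant polynomials are transcendental over $\kappa(u)$.
\item You conclude because a section of the \'etale morphism $X\times_S\AA^1_U\to\AA^1_U$ is an open immersion, so it is determined by its image together with the residue-field data you established. This step really does use that $\Gamma_g\to\AA^1_U$ is an isomorphism; for general morphisms, two sections with the same open image can differ, e.g.\ on a nilpotent thickening.
\end{enumerate}

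What your route buys is generality. It never uses integrality of $S$, smoothness of $U$, or separatedness of $X$. So your remarks that integrality enters through the reduction to connected $U$, or through reducedness, are unnecessary. The open locus $W$ and the connectedness reduction play no role in your final argument and can be deleted.

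By contrast, the paper's density step tacitly needs the equalizer of two $S$-morphisms into $X$ to be closed, that is, $X\to S$ separated. For the affine line with doubled origin over $\AA^1_k$, the two obvious sections agree over the generic point but are different. The paper's proof is shorter, given the field case as a black box, but your fiberwise argument also covers non-separated $X$ and arbitrary bases.
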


\begin{proof}
Let $h:U\to S$ be a smooth $S$-scheme and let $\varphi:\AA^1_U\to X$ be an $S$-morphism. Since $\varphi$ factors through $U$ if and only if its restriction to each connected component $\AA^1_{U'}$ of $\AA^1_U$ factors through the corresponding component $U'$ of $U$, we may assume without loss of generality that $U$ is connected.

Consider the commutative diagram
\[
\xymatrix{
\AA^1_U\times_U \AA^1_U \ar@<-0.5ex>[r]_-{\mathrm{pr}_2} \ar@<0.5ex>[r]^-{\mathrm{pr}_1}
& \AA^1_U \ar[d]_{\mathrm{pr}_U} \ar[r]^{\varphi}
& X \ar[d]^{f} \\
& U \ar[r]^{h}
& S
}
\]
Let $\eta$ be the generic point of $S$ with residue field $K$. Since $f_\eta:X_\eta\to \Spec(K)$ is an \'etale $K$-scheme, it is $\AA^1$-rigid by \cite[Example~2.1.10]{AM11}. It follows that the base change
\[
\varphi_\eta:\AA^1_{U_\eta}\to X_\eta
\]
factors through the projection $\mathrm{pr}_{U_\eta}:\AA^1_{U_\eta}\to U_\eta$. Equivalently, the two morphisms
\[
\varphi\circ \mathrm{pr}_1,\ \varphi\circ \mathrm{pr}_2:\AA^1_U\times_U \AA^1_U\to X
\]
agree on the inverse image of $U_\eta\times_{U_\eta}U_\eta$, hence on a locally closed subset whose image is Zariski dense in $\AA^1_U\times_U \AA^1_U$. Therefore,
\[
\varphi\circ \mathrm{pr}_1=\varphi\circ \mathrm{pr}_2.
\]
By faithfully flat descent for the finitely presented morphism $\mathrm{pr}_U:\AA^1_U\to U$, it follows that $\varphi$ uniquely factors through $\mathrm{pr}_U$, i.e.,
\[
\varphi=\bar{\varphi}\circ \mathrm{pr}_U
\]
for a unique morphism $\bar{\varphi}:U\to X$. This proves that $f:X\to S$ is $\AA^1$-rigid.
\end{proof}

\subsection{\texorpdfstring{$\AA^1$}{A1}-contractibility of smooth schemes of relative dimension 1}
\label{sect:reldim=1}

The classification of smooth proper curves over a field $k$ up to $\AA^1$-weak equivalence is well understood \cite[Proposition 2.1.12]{AM11}: the only $\AA^1$-connected proper curve is $\PP^1_k$, whereas every proper curve of positive genus and every nontrivial $k$-form of $\PP^1_k$ is $\AA^1$-rigid. When $k$ is perfect, a standard completion argument shows that a smooth separated curve over $k$ is $\AA^1$-contractible if and only if it is isomorphic to $\AA^1_k$. In fact, the following stronger statement holds over an arbitrary field.

\begin{prop}\label{1-dim theorem}
An $\AA^1$-contractible smooth curve over a field $k$ is isomorphic to the affine line $\AA^1_k$. 
\end{prop}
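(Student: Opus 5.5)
Let $X$ be an $\AA^1$-contractible smooth curve over $k$. We first collect the formal consequences of contractibility. $X$ is $\AA^1$-connected, so it is connected and, by \Cref{S-point}, has a $k$-rational point $x_0$. Being smooth and connected, $X$ is geometrically integral. For every field extension $k\subset k'$, \Cref{basechange:imperfect} shows that $X_{k'}$ is again $\AA^1$-contractible. Contractibility also gives $\Pic(X)=0$, since $\Pic=[\,\cdot\,,B\GG_m]_{\AA^1}$ on smooth schemes. It gives $\mathcal{O}(X)^\times=k^\times$, since $\GG_m$ is $\AA^1$-rigid, so $[X,\GG_m]_{\AA^1}=\GG_m(k)$. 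Finally, $X$ is not proper: a proper $\AA^1$-connected curve is $\PP^1_k$ by \cite[Proposition 2.1.12]{AM11}, and $\PP^1_k$ is not contractible because $\Pic(\PP^1_k)=\mathbb{Z}$. Hence $X$ is affine.

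Next we pass to the algebraic closure. By the above, $X_{\bar k}$ is a smooth affine $\AA^1$-contractible curve over $\bar k$. Let $\bar C$ be its smooth completion and $D=\bar C\setminus X_{\bar k}$ the finite nonempty boundary. If $\bar C$ had genus $g>0$, composing $X_{\bar k}\to \bar C$ with a nonconstant map to an abelian variety (the Albanese) would give a nonconstant map from $X_{\bar k}$ to an $\AA^1$-rigid scheme; but such maps factor through $\pi_0^{\AA^1}(X_{\bar k})=\mathrm{pt}$, a contradiction. So $\bar C\cong\PP^1$. If $|D|\ge 2$, there is a nonconstant unit on $X_{\bar k}$, contradicting $\mathcal{O}^\times=\bar k^\times$. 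Hence $X_{\bar k}\cong\AA^1_{\bar k}$, so $X$ is a $k$-form of $\AA^1_k$.

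We then use Russell's description of such forms \cite{russell1970forms}, exactly as in \Cref{ex:forms-A1}. The regular completion $\bar X$ of $X$ is a regular projective curve with arithmetic genus computable in terms of the form. The boundary is a single closed point $x_\infty$, since it is geometrically one point; its residue field $\kappa(x_\infty)$ is purely inseparable over $k$. Since $X(k)\neq\emptyset$, the curve $\bar X$ carries a rational point, and one would like to conclude $\bar X\cong\PP^1_k$, so that $X=\PP^1_k\setminus\{x_\infty\}$. If $\deg x_\infty=[\kappa(x_\infty):k]>1$, then \cite[Theorem~4.1]{achet2017picard} gives $\Pic(X)\ne 0$, contradicting $\Pic(X)=0$. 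Hence $x_\infty$ is rational and $X\cong\PP^1_k\setminus\{pt\}\cong\AA^1_k$.

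The main obstacle is the case where $\bar X$ is not smooth, that is, where $X$ is a nontrivial form of $\AA^1_k$ whose regular completion has positive arithmetic genus (the wild forms in characteristic $p$). The plan there is to show directly that $\Pic(X)\neq 0$, or that $\mathcal{O}(X)^\times$ is nontrivial, or more generally that $X$ fails $\AA^1$-connectedness, using the finer statements of \cite{russell1970forms} and \cite{achet2017picard}. The expectation is that every nontrivial form of $\AA^1$ has nonzero Picard group once it has a rational point; if that fails, fall back on showing that $X$ has a nonconstant map to an $\AA^1$-rigid object, such as the generalized Jacobian of $\bar X$ relative to $x_\infty$, which is a unipotent-free, $\AA^1$-rigid group when the form is nontrivial.
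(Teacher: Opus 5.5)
There are two genuine gaps.

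\textbf{Non-separated curves.} Your argument silently assumes $X$ is separated. Both ``not proper, hence affine'' and the open embedding $X_{\bar k}\subset\bar C$ fail for, e.g., the affine line with a doubled point. The statement covers such curves: smooth means smooth of finite presentation, and the paper explicitly contrasts the proposition with the separated case. The paper's proof over $\bar k$ is built to handle this, and you need the same step:
\begin{enumerate}
\item The birational map $\tau\colon X_{\bar k}\to\hat C$ to the regular completion is everywhere defined. By Zariski's Main Theorem it is an open immersion on each separated open, hence \'etale onto an open $\tilde C$.
\item One has $\tilde\tau_*\mathcal{O}=\mathcal{O}_{\tilde C}$, so $\tilde\tau^*$ is injective on $\Pic$ and on units. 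Your genus and unit arguments, together with $\Pic(\PP^1)=\mathbb{Z}$, then give $\tilde C\cong\AA^1_{\bar k}$.
\item If a fiber of $\tilde\tau$ contained two points $c_1\neq c_2$, then $\mathcal{O}(c_1)$ would be a nontrivial line bundle. Indeed, $\tilde\tau$ is a local isomorphism at both points, so every rational function has equal orders at $c_1$ and $c_2$, and no rational function has divisor $c_1$.
\end{enumerate}
Hence $\tilde\tau$ is an isomorphism.

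\textbf{Descent from $\bar k$ to $k$.} You conclude only when $\bar X\cong\PP^1_k$ and leave the wild forms to an expectation. You are right that $\bar X$ need not be $\PP^1_k$ a priori. However, nothing finer is needed, and no generalized Jacobian, because the degree argument never uses smoothness of $\bar X$.
\begin{enumerate}
\item Since $\bar X$ is a regular proper curve, $\Pic(X)=\Pic(\bar X)/\mathbb{Z}[x_\infty]$ and degrees are defined on $\Pic(\bar X)$.
\item The rational point $x_0$ has degree $1$, while every class in $\mathbb{Z}[x_\infty]$ has degree divisible by $[\kappa(x_\infty):k]$. So $\Pic(X)=0$ forces $x_\infty$ to be $k$-rational.
\item A $k$-rational regular point is a smooth point, and $X$ is smooth, so $\bar X$ is smooth.
\item Then $\bar X_{\bar k}$ is the smooth completion of $\AA^1_{\bar k}$, i.e.\ $\PP^1_{\bar k}$. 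Thus $\bar X$ has genus $0$ and a rational point, so $\bar X\cong\PP^1_k$ and $X=\bar X\setminus\{x_\infty\}\cong\AA^1_k$.
\end{enumerate}
This is the input the paper imports by citing \Cref{ex:forms-A1}.

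The rest of your argument over $\bar k$ is fine for separated curves. Using Albanese rigidity instead of $\Pic$ to exclude positive genus is a harmless variant.
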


\begin{proof}
Let $C$ be a smooth $\AA^1$-contractible curve over $k$. Assume first that $k$ is algebraically closed. Since $C$ is $\AA^1$-connected, it is connected and hence irreducible. Moreover, one has $\mathcal{O}_C(C)^*=k^*$ and $\Pic(C)=0$. We claim that these two properties characterize $\AA^1_k$ among smooth connected curves over $k$.
\medskip

Let $U\subset C$ be a nonempty affine open subset. Its regular completion $\hat{C}$ is a smooth projective curve over $k$, independent of the choice of $U$. The inclusion $U\hookrightarrow \hat{C}$ induces a birational map $\tau:C\dashrightarrow \hat{C}$ which is everywhere defined by the valuative criterion for properness. By Zariski’s Main Theorem \cite[Corollaire 8.12.10]{EGAIV-Grothendieck1966}, the restriction $\tau|_V:V\to \hat{C}$ is an open immersion for every separated open subset $V\subset C$, hence $\tau$ is \'etale.
\medskip

Let $\tilde{C}\subset \hat{C}$ denote the image of $\tau$. The induced morphism $\tilde{\tau}:C\to \tilde{C}$ is surjective and satisfies $\tilde{\tau}_*\mathcal{O}_C=\mathcal{O}_{\tilde{C}}$. Consequently, the pullback maps
\[
\tilde{\tau}^*:\Pic(\tilde{C})\to \Pic(C)
\quad\text{and}\quad
\tilde{\tau}^*:\mathcal{O}_{\tilde{C}}(\tilde{C})^*\to \mathcal{O}_C(C)^*
\]
are injective. Since $\Pic(C)=0$ and $\mathcal{O}_C(C)^*=k^*$, it follows that $\tilde{C}\simeq \AA^1_k$, and that $\tilde{\tau}:C\to \AA^1_k$ is the affinization morphism.
\medskip

If $\tilde{\tau}$ were not an isomorphism, there would exist a point $x\in \AA^1_k$ such that $\tilde{\tau}^{-1}(x)=\{c_1,\dots,c_r\}$ with $r\ge 2$. Then $\mathcal{O}_C(c_1)$ would define a nontrivial invertible sheaf on $C$, contradicting $\Pic(C)=0$. Hence $\tilde{\tau}$ is an isomorphism and $C\simeq \AA^1_k$.

Now let $k$ be arbitrary, and let $\bar{k}$ be an algebraic closure. By \Cref{basechange:imperfect}, the base change $C_{\bar{k}}$ is $\AA^1$-contractible over $\bar{k}$ and hence isomorphic to $\AA^1_{\bar{k}}$ by the previous case. Thus $C$ is a $k$-form of $\AA^1_k$. By \Cref{ex:forms-A1}, any nontrivial form fails to be $\AA^1$-contractible, hence $C\simeq \AA^1_k$.
\end{proof}

Combining \Cref{cor:A1-cont-fibers} with \Cref{1-dim theorem}, we deduce that a smooth morphism $f:X\to S$ of finite presentation and relative dimension $1$ which is an $\AA^1$-weak equivalence is an $\AA^1$-fiber space in the sense of \Cref{def:A1-fib-space}. Conversely, any $\AA^1$-fiber space satisfying the hypotheses of \Cref{thm:glue-A1-fiber} or \Cref{thm:An-fiber-space-contractible} is an $\AA^1$-weak equivalence. The following theorem gives a precise structural characterization.

\begin{theorem}\label{1-dim DD}
Let $f:X\to S$ be a smooth morphism of finite presentation and relative dimension $1$ over a locally noetherian \emph{normal} scheme $S$. The following are equivalent:
\begin{enumerate}
  \item $f$ is an $\AA^1$-weak equivalence in $\mathrm{Spc}_S^{\AA^1}$;
  \item $f$ is an $\AA^1$-fiber space;
  \item $f$ is a Zariski locally trivial $\AA^1$-bundle. 
\end{enumerate}    
\end{theorem}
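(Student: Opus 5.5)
The plan is to prove the cycle of implications (3)$\Rightarrow$(1)$\Rightarrow$(2)$\Rightarrow$(3).

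For (3)$\Rightarrow$(1), note that a Zariski locally trivial $\AA^1$-bundle is in particular Nisnevich locally trivial with $\AA^1$-contractible fibers. Hence it is an $\AA^1$-weak equivalence by \Cref{eg:A1-contractibles}; alternatively, one can apply \Cref{lem:A1-cont-local-on-base} to a Zariski trivializing cover.

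For (1)$\Rightarrow$(2), apply \Cref{cor:pushpull-A1-weak}(1) to the inclusions $\Spec(\kappa(s))\to S$. This shows that each fiber $X_s$ is a smooth $\AA^1$-contractible curve over $\kappa(s)$. By \Cref{1-dim theorem}, $X_s\cong\AA^1_{\kappa(s)}$ for every $s\in S$, so $f$ is an $\AA^1$-fiber space. This direction does not need finite Krull dimension, since it only uses pullback.

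The substantive step is (2)$\Rightarrow$(3), for an $\AA^1$-fiber space over a locally noetherian normal base. The statement is local on $S$, so fix $s\in S$ and replace $S$ by $\Spec(\mathcal{O}_{S,s})$, a normal noetherian local domain. The goal is to show that $X$ is trivial there, and then spread the trivialization out to a Zariski neighborhood using finite presentation. I intend to invoke the classical theorem of Kambayashi--Miyanishi (and its refinements by Bass--Connell--Wright and Asanuma) asserting that an $\AA^1$-fiber space over a normal locally noetherian base is locally trivial, i.e.\ that a flat finitely presented $A$-algebra $B$ with all fibers $\kappa(\mathfrak p)[t]$ is $A^{[1]}$ when $A$ is a normal local ring.

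If I had to argue more directly, I would proceed in three stages.
\begin{enumerate}
\item Over the generic point $\eta$, $X_\eta\cong\AA^1_K$, so $X$ is trivial over a dense open subset.
\item Compactify $X\subset\PP^1_U$ over a neighborhood, and show that the boundary section extends to a section over codimension-one points. Here normality makes the local rings at these points DVRs, and over a DVR one checks directly that an $\AA^1$-fiber space is trivial: the special fiber is $\AA^1$, and the closure in $\PP^1$ is a smooth $\PP^1$-bundle whose boundary is a section.
\item Extend from codimension $\le 1$ to all of $S$ by Hartogs-type arguments, using that $\mathcal{O}_X$-functions such as a fiber coordinate extend across codimension $\ge 2$ on a normal scheme.
\end{enumerate}

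To handle the non-affine case, I would first show that $f$ is affine. The approach is to observe that $f$ is separated, since the fibers are separated and the base is normal, which rules out the doubled-point pathology visible in \Cref{eg:notZLT}. Affineness would then follow from the relative-dimension-one structure.

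The main obstacle is precisely this extension step over a normal base of higher dimension. \Cref{ex:KambWright} and \Cref{ex:Hamann-example} show that normality is essential, so the argument must genuinely use it. I would rely on the cited Kambayashi--Miyanishi/Bass--Connell--Wright theorem rather than reprove it, checking only that its hypotheses (flatness, finite presentation, affineness, fibers $\AA^1$) are satisfied.
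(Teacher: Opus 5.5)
Your implications (3)$\Rightarrow$(1) and (1)$\Rightarrow$(2) are the same as the paper's. The gap is in (2)$\Rightarrow$(3). The theorem you plan to cite (Kambayashi--Miyanishi, Bass--Connell--Wright, Asanuma) concerns \emph{affine} $\AA^1$-fiber spaces; you list affineness among its hypotheses yourself. Here, however, $f$ is only smooth of finite presentation, so a priori it is neither affine nor even separated. The paper stresses that even Kambayashi--Wright \cite{KW85} assume separatedness, and removing that assumption is the new content of its proof.

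Your reduction to the affine case does not work.
\begin{enumerate}
\item The implication ``separated fibers and normal base imply $f$ separated'' is false. Take a DVR $S=\Spec(R)$ with fraction field $K$, and glue two copies of $\AA^1_S$ along the open subscheme $\AA^1_K$. The result is smooth, of finite presentation and of relative dimension $1$ over a normal base. Its fibers $\AA^1_K$ and $\AA^1_\kappa\sqcup\AA^1_\kappa$ are separated, yet it is not separated. Any separatedness argument must therefore genuinely use that the fibers are affine lines.
\item Separatedness and affineness of $f$ are \emph{consequences} of (3), so assuming them amounts to assuming most of the theorem.
\item ``Affineness would follow from the relative-dimension-one structure'' is given no argument.
\item You misread \Cref{eg:notZLT}. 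There the base $\AA^2_{\mathbb{Z}}$ is regular and $X$ is quasi-affine, hence separated. Its pathology is non-affineness, which shows that affineness is not automatic for $\AA^n$-fiber spaces over regular bases.
\end{enumerate}

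The missing idea is to avoid any a priori separatedness or affineness, as the paper does.
\begin{enumerate}
\item Construct an $S$-morphism $h\colon X\to\AA^1_S$ that is an isomorphism over a dense open subset of $S$. Over a DVR, use a rescaled coordinate of the generic fiber, corrected by \cite[Proposition~1.4]{KW85} when $h_s$ is constant. In dimension $\geq 2$, use triviality of $X$ over the punctured spectrum (induction plus \cite{KW85}), then extend the coordinate across $X_s$ by Hartogs, since $X_s$ has codimension $\geq 2$ in the normal scheme $X$.
\item Show that $h$ is \'etale. Apply Zariski's Main Theorem only on \emph{separated} open subsets of $X$, or use purity for the map of invertible sheaves $h^*\Omega_{\AA^1_S/S}\to\Omega_{X/S}$.
\item Conclude: $h$ is bijective with trivial residue field extensions, hence universally injective, hence an open immersion (Stacks Project, Tag 02LC), and being surjective it is an isomorphism.
\end{enumerate}

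Your direct three-stage sketch has a related hole. Local trivializations at the codimension-one points only give an $\AA^1$-bundle over a big open subset, not a single trivialization that Hartogs could extend. You would need a global triviality statement over the punctured spectrum, as in \cite{KW85}, and the sketch does not supply one.
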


\begin{proof}
The implication $(3)\Rightarrow (1)$ follows from \Cref{eg:A1-contractibles}, while $(1)\Rightarrow (2)$ is a direct consequence of \Cref{cor:A1-cont-fibers} together with \Cref{1-dim theorem}. It remains to prove $(2)\Rightarrow (3)$. This is essentially due to Kambayashi–Wright \cite{KW85}, except that their argument assumes in addition that $f$ is separated. We briefly recall the proof and indicate how to remove this assumption.
\medskip

We first treat the case $S=\Spec(R)$, where $R$ is a discrete valuation ring. Let $\eta$ and $s$ denote the generic and closed points of $S$, with residue fields $K=\mathrm{Frac}(R)$ and $\kappa$, and let $\pi$ be a uniformizer of $R$. Since $f:X\to S$ is an $\AA^1$-fiber space, the generic fiber $X_\eta$ is isomorphic to $\AA^1_K=\Spec(K[x_0])$. Viewing $x_0$ as a rational function on $X$ and multiplying it by a suitable power of $\pi$, we may assume that $x_0$ extends to a regular function on $X$ whose restriction to the closed fiber $X_s\cong \AA^1_\kappa$ is nonzero. The inclusion $R[x_0]\hookrightarrow \Gamma(X,\mathcal{O}_X)$ then defines an $S$-morphism
\[
h:X\to \AA^1_S=\Spec(R[x_0])
\]
which is an isomorphism over $\eta$.
\medskip

If the restriction $h_s:X_s\to \AA^1_s$ is constant, then by \cite[Proposition~1.4]{KW85} there exist an $S$-endomorphism $\tau:\AA^1_S\to \AA^1_S$ and a quasi-finite $S$-morphism $h':X\to \AA^1_S$, both inducing isomorphisms over $\eta$, such that $h=h'\circ \tau$. For every separated Zariski open subset $U\subset X$, the restriction $h'|_U:U\to \AA^1_S$ is birational and quasi-finite, hence an open immersion by Zariski’s Main Theorem \cite[Corollaire~8.12.10]{EGAIV-Grothendieck1966}. It follows that $h'$ is \'etale and an isomorphism over $\eta$. Since $h'_s:X_s\to \AA^1_\kappa$ is also \'etale, it is an isomorphism. Thus $h'$ is an \'etale bijection with trivial residue field extensions, and therefore an isomorphism. This proves the claim when $\dim S=1$.
\medskip

We now treat the general case by induction on $\dim S$. Since Zariski local triviality is local on the base, we may assume that $S=\Spec(R)$ is a noetherian normal local scheme of dimension $n\geq 2$, with closed point $s$. By the induction hypothesis, the restriction
\[
f_{S^\star}:X_{S^\star}\to S^\star:=S\setminus\{s\}
\]
is a Zariski locally trivial $\AA^1$-bundle. By \cite[\S2.2--2.3]{KW85}, it is in fact trivial, so that $X_{S^\star}\cong \AA^1_{S^\star}$ over $S^\star$. This yields a diagram
\[
\begin{tikzcd}
X \arrow[d,swap,"f"] & X_{S^\star}\cong \AA^1_{S^\star} \arrow[l] \arrow[r] \arrow[d,"\mathrm{pr}_{S^\star}"] & \AA^1_S \arrow[d,"\mathrm{pr}_S"] \\
S & S^\star \arrow[l] \arrow[r] & S
\end{tikzcd}
\]
with horizontal arrows open immersions. It determines a rational $S$-map
\[
h:X\dashrightarrow \AA^1_S
\]
which is an isomorphism over $S^\star$. Since $\AA^1_S$ is affine, $h$ is given by a rational function on $X$ that is regular on $X\setminus X_s$. As $S$ is normal and $f$ is flat, $X$ is normal; moreover, $X_s$ has codimension at least $2$ in $X$. Hence $h$ extends to a morphism $h:X\to \AA^1_S$.
\medskip

By construction, $h$ is an isomorphism over $S^\star$. Thus the canonical morphism
\[
c:h^*\Omega_{\AA^1_S/S}\longrightarrow \Omega_{X/S}
\]
is an isomorphism on $X\setminus X_s$. Both sheaves are invertible, and since $X_s$ has codimension $\geq 2$, it follows from \cite[Théorème~5.10.5]{EGAIV-Grothendieck1966} that $c$ is an isomorphism everywhere. Hence $h$ is \'etale. Since $h$ is an isomorphism over $S^\star$ and $X_s\cong \AA^1_{\kappa(s)}$, the same argument as in the one-dimensional case shows that $h_s$ is an isomorphism, and therefore that $h$ is an isomorphism of $S$-schemes.
\end{proof}

\begin{remark}\label{exa:cusp-base-non-loctrivial}
\Cref{ex:KambWright} and \Cref{ex:Hamann-example} show that the normality assumption in \Cref{1-dim DD} is necessary for the implication $(2)\Rightarrow (3)$.
\end{remark}

We record the following useful criterion for smooth morphisms of relative dimension $1$ over an uncountable algebraically closed field of characteristic zero.

\begin{prop} \label{prop:close-fibers-A1}
Let $k$ be an uncountable algebraically closed field of characteristic zero. A smooth morphism $f:X\to S$ of relative dimension $1$ between algebraic varieties over $k$ is an $\AA^1$-weak equivalence in $\mathrm{Spc}_S^{\AA^1}$ if and only if all its fibers over closed points of $S$ are isomorphic to $\AA^1_k$.
\end{prop}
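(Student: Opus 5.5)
The forward implication is immediate: if $f$ is an $\AA^1$-weak equivalence in $\mathrm{Spc}_S^{\AA^1}$, then by \Cref{cor:A1-cont-fibers} (or directly \Cref{pullbackfunctor}) every fiber $X_s$ over a closed point is an $\AA^1$-contractible smooth curve over $\kappa(s)=k$. By \Cref{1-dim theorem} it is then isomorphic to $\AA^1_k$.

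For the converse, I would apply \Cref{cor:A1-cont-fibers}; $S$ is of finite type over $k$, hence qcqs of finite Krull dimension. It then suffices to show that for every point $s\in S$, closed or not, the fiber $X_s$ is $\AA^1$-contractible over $\kappa(s)$. By \Cref{1-dim theorem}, and since $\AA^1_{\kappa(s)}$ is contractible, this amounts to showing $X_s\cong\AA^1_{\kappa(s)}$. The plan has two steps.
\begin{enumerate}
\item Show that the geometric fiber $X_{\bar s}$ over an algebraic closure $\overline{\kappa(s)}$ is isomorphic to $\AA^1$.
\item Descend from $X_{\bar s}\cong \AA^1$ to $X_s\cong\AA^1_{\kappa(s)}$. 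Since $\kappa(s)$ has characteristic zero, $X_s$ is a separable form of $\AA^1$, and by \cite{russell1970forms} (as recalled in \Cref{ex:forms-A1}) it is trivial. Alternatively one can argue directly: the complement of $X_s$ in its regular completion is a single $\kappa(s)$-rational point, so $X_s\cong\AA^1_{\kappa(s)}$.
\end{enumerate}

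The main obstacle is Step 1, and this is where uncountability enters. Replacing $S$ by the closure $T=\overline{\{s\}}$ with its reduced structure, and shrinking to a dense open subset, I may assume $T$ is an irreducible variety with generic point $s$. The set of closed points of $T$ where the fiber is isomorphic to $\AA^1_k$ is all of $T(k)$. I would then use the standard "very general fiber" principle over an uncountable algebraically closed field: the property "fiber isomorphic to $\AA^1$" is a countable union of constructible conditions. Concretely, I would choose a relative compactification $\bar X\to T$ with boundary $D$ after shrinking $T$, which is possible by characteristic zero and generic smoothness. The isomorphism types are then governed by countably many Hilbert scheme components parametrizing pairs (curve, boundary divisor) of given degree, or equivalently by closed immersions $\AA^1\hookrightarrow$ a fixed ambient space of bounded degree. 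The locus of $t\in T(k)$ whose fiber is isomorphic to $\AA^1$ is thus a countable union of images of constructible sets. Since it covers $T(k)$ and $k$ is uncountable, one of these constructible sets is dense. Hence the generic geometric fiber $X_{\bar s}$ is isomorphic to $\AA^1_{\overline{\kappa(s)}}$.

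A more hands-on alternative for Step 1, which I would try first, uses invariants constant in families. Generic smoothness of a compactification shows that over a dense open of $T$ the genus of the completion of the fibers and the number of geometric points at infinity are constant. Both are preserved under passing to the geometric generic fiber, and for closed fibers they equal $0$ and $1$. Then $X_{\bar s}$ is $\PP^1$ minus one point, i.e. $\AA^1$. Uncountability should only be needed to handle possible disconnected fibers or nonconstructibility issues, and I would use it through the Baire-type argument above if the invariants approach runs into trouble.
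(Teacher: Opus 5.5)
Your proposal is correct. Its overall frame is the paper's: reduce via \Cref{cor:A1-cont-fibers} (equivalently \Cref{thm:glue-A1-fiber}) to showing that every fiber $X_s$ is $\AA^1_{\kappa(s)}$, pass to the closure of $s$, and finish with the absence of nontrivial separable forms of $\AA^1$. The difference lies in your Step~1.

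The paper realizes the ``very general point'' principle as follows. It spreads $f$ out to $f_0\colon X_0\to S_0$ over a countable subfield $k_0\subset k$, and uses uncountability to choose a closed point $\Spec(k)\to S$ mapping to the generic point of $S_0$. The fiber over this point is then literally the generic fiber of $f_0$ base-changed along an embedding $K_0\subset k$. So that generic fiber is a form of $\AA^1_{K_0}$, hence trivial, and one base-changes to $K$.

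Your Baire-type argument with countably many constructible loci is the same idea. It needs more machinery, though: finite-type parameter spaces for isomorphisms with $\AA^1$ (awkward when $X$ is not affine), and a descent from a point lying over the generic point of such a space. The countable-field trick avoids all of this.

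Your ``hands-on'' alternative is genuinely different, and it is already complete, so you never need to fall back on uncountability. After shrinking, spread out a smooth projective completion $\bar X_U\to U$ of $X_s$ whose boundary $D_U$ is finite \'etale over a dense open $U\subset T$. The closed fibers then give $h^0(\mathcal{O})=1$, $h^1(\mathcal{O})=0$ and $\deg(D_U/U)=1$. Since $\chi(\mathcal{O})$ is locally constant, and the closed set $\{h^0\geq 2\}\subset U$ contains no closed point and is therefore empty, the completion of $X_s$ is geometrically connected of genus $0$. It has a single rational boundary point, so $X_s\cong\AA^1_{\kappa(s)}$ outright. Disconnected fibers are therefore no issue, and Step~2 can even be bypassed.

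This route shows that the uncountability hypothesis is superfluous in characteristic zero. It also makes transparent why the statement fails in characteristic $p$ (\Cref{exa:form-generic-fiber}): there the regular completion of the generic fiber is not smooth and cannot be spread out in this way. The paper's route is shorter in exchange and needs no relative compactification or semicontinuity.
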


\begin{proof}
By \Cref{thm:glue-A1-fiber}, it suffices to show that if all closed fibers of $f$ are isomorphic to $\AA^1_k$, then $f$ is an $\AA^1$-fiber space in the sense of \Cref{def:A1-fib-space}. Let $s\in S$ be a (not necessarily closed) point with Zariski closure $Z\subset S$. Replacing $S$ by $Z$, we reduce to the case where $S$ is integral. It is therefore enough to prove that the fiber of $f$ over the generic point of $S$ is isomorphic to $\AA^1_K$, where $K$ is the function field of $S$.
\medskip

Since $X$ and $S$ are of finite type over $k$, there exists a countable subfield $k_0\subset k$, varieties $X_0$ and $S_0$ defined over $k_0$, and a smooth morphism $f_0:X_0\to S_0$ such that $f$ is obtained from $f_0$ by base change along $\Spec(k)\to \Spec(k_0)$. Because $k$ is uncountable, there exists a closed point $s:\Spec(k)\to S$ whose image in $S_0$ is dominant. Equivalently, the composition $\Spec(k)\to S\to S_0$ factors through the generic point $\Spec(K_0)$ of $S_0$, where $K_0$ is the function field of $S_0$. Since the fiber $X_s$ is isomorphic to $\AA^1_k$, it follows that the generic fiber of $f_0$ is a $K_0$-form of $\AA^1_{K_0}$. 
\medskip

As the affine line admits no nontrivial separable forms, this form is trivial, and hence the generic fiber of $f_0$ is isomorphic to $\AA^1_{K_0}$. After base change to $K$, it follows that the generic fiber of $f$ is isomorphic to $\AA^1_K$, as required.
\end{proof}

\medskip

In positive characteristic, the existence of nontrivial purely inseparable forms of the affine line shows that the criterion of \Cref{prop:close-fibers-A1} fails in general, as illustrated by the following example.

\begin{example}\label{exa:form-generic-fiber}
Let $k$ be an algebraically closed field of characteristic $p>0$, and let $X\subset \AA^3_k=\Spec(k[x,y,z])$ be the smooth surface defined by the equation
\[
z^p+xy^p-y =0.
\]
The projection $f:X\to S=\Spec(k[x])$ onto the first coordinate is a smooth morphism whose closed fibers are all isomorphic to $\AA^1_k$. However, the fiber over the generic point of $S$ is a nontrivial $k(x)$-form $Y$ of $\AA^1_{k(x)}$, which becomes trivial after the purely inseparable extension $k(x^{1/p})/k(x)$. In particular, $f$ is not an $\AA^1$-fiber space in the sense of \Cref{def:A1-fib-space}. Since $Y$ is not $\AA^1$-contractible over $k(x)$ (see \Cref{ex:forms-A1}), it follows from \Cref{cor:pushpull-A1-weak}(1) that $f$ is not an $\AA^1$-weak equivalence in $\mathrm{Spc}_{\AA^1_k}^{\AA^1}$.
\end{example}

\smallskip

As a consequence of \Cref{1-dim DD}, we obtain the following characterization.

\begin{corollary}\label{A1unique:DD}
Let $S$ be a noetherian normal scheme. A smooth morphism $f:X\to S$ of finite presentation and relative dimension $1$ is isomorphic to $\AA^1_S$ if and only if $f$ is an $\AA^1$-weak equivalence in $\mathrm{Spc}_S^{\AA^1}$, the relative canonical sheaf $\omega_f=\Omega_{X/S}$ is trivial, and $f$ admits a section.
\end{corollary}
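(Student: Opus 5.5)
The forward implication is immediate. If $X\cong \AA^1_S$, then $f$ is an $\AA^1$-weak equivalence by \Cref{eg:A1-contractibles}. Moreover, $\Omega_{\AA^1_S/S}=\mathcal{O}\,dx$ is free of rank one, and the zero section provides a section. The substance lies in the converse, which I would reduce to a descent statement for $\AA^1$-bundles with structure group $\mathrm{Aut}(\AA^1)=\GG_a\rtimes\GG_m$.

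For the converse, assume $f$ is an $\AA^1$-weak equivalence with $\omega_f\cong\mathcal{O}_X$ and a section $\sigma\colon S\to X$. By \Cref{1-dim DD}, which applies since $S$ is noetherian and normal, $f$ is a Zariski locally trivial $\AA^1$-bundle. Choose an open affine cover $\{U_i\}$ of $S$ with trivializations $X_{U_i}\cong \Spec(\mathcal{O}_{U_i}[x_i])$. We may also assume each $U_i$ is connected, or work locally on connected components. On overlaps, the transition automorphisms of $\AA^1_{U_{ij}}$ are $S$-automorphisms of the polynomial ring $\mathcal{O}(U_{ij})[x]$. Over a reduced base, these are exactly the affine maps $x_i=a_{ij}x_j+b_{ij}$ with $a_{ij}\in\mathcal{O}(U_{ij})^*$. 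This holds because a polynomial automorphism over a reduced ring has invertible linear coefficient and nilpotent higher coefficients; normal implies reduced, so the latter vanish. Thus $X$ is an $\AA^1$-bundle with structure group $\GG_a\rtimes\GG_m$: it is a torsor under the line bundle $L$ given by the cocycle $(a_{ij})$.

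Next, I would use the two hypotheses to kill the two pieces of data. The section $\sigma$ lets us translate each trivialization so that $\sigma$ becomes the zero section, i.e.\ replace $x_i$ by $x_i-\sigma^*x_i$. Then every $b_{ij}=0$, and $X$ is identified with the total space $\mathbb{V}(L)$ of the line bundle $L$. For the differentials, $\Omega_{X/S}$ is generated locally by $dx_i$ with $dx_i=a_{ij}\,dx_j$. Hence $\Omega_{X/S}\cong f^*L^{\vee}$ (up to the convention of dualizing), so $\sigma^*\Omega_{X/S}\cong L^{\vee}$. Triviality of $\omega_f$ therefore gives $L^\vee\cong\sigma^*\mathcal{O}_X=\mathcal{O}_S$. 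So $L$ is trivial, and $X\cong\mathbb{V}(\mathcal{O}_S)=\AA^1_S$ over $S$.

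The main point to verify carefully is the identification of the transition functions as affine, which uses reducedness of $S$. One must also check the compatibility of $\Omega_{X/S}$ with $f^*$ of the line bundle, that is, the sign or dual convention; either convention gives triviality of $L$. Everything else is formal. Note that without the section the torsor class in $H^1(S,L)$ could be nonzero, and without $\omega_f$ trivial, $L$ could be nontrivial, so both hypotheses are genuinely used.
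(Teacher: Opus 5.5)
Your proof is correct. It follows the paper's overall skeleton: \Cref{1-dim DD} gives Zariski local triviality, the structure group reduces to $\GG_a\rtimes\GG_m$, and the two hypotheses kill the linear and translation parts of the cocycle. However, you use the hypotheses in the opposite order and with a lighter tool.

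The paper first identifies $X$ as a torsor under $\mathbb{V}(\mathcal{L})$ and notes $\omega_f\cong f^*\mathcal{L}$. It then invokes Magid's theorem that $f^*\colon\Pic(S)\to\Pic(X)$ is an isomorphism over a normal base, which gives $\mathcal{L}\cong\mathcal{O}_S$ without using the section. The section enters only at the end, to trivialize the resulting $\GG_a$-torsor.

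You instead use the section first to normalize all $b_{ij}$ to zero, so that $X\cong\mathbb{V}(L)$. You then pull back $\Omega_{X/S}\cong f^*L^{\pm1}$ along $\sigma$. Since $\sigma^*f^*=\mathrm{id}$, injectivity of $f^*$ on Picard groups comes for free, and you need no appeal to Magid's theorem. Normality then enters only through \Cref{1-dim DD} and through reducedness.

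You also make explicit why the transition automorphisms are affine: over a reduced base the higher coefficients are nilpotent, hence zero. The paper compresses this into the assertion that the automorphism group scheme of $\AA^1_S$ is $\GG_{a,S}\rtimes\GG_{m,S}$, which is accurate only on reduced test schemes.

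What the paper's ordering buys is the intermediate fact that $\omega_f\cong\mathcal{O}_X$ alone already forces a $\GG_a$-torsor structure. Your route does not give this directly. The subsequent remark about affine bases still follows from your argument, though, since over an affine $S$ a torsor under a line bundle always has a section.
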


\begin{proof}
The forward implication is immediate: the affine line $\AA^1_S$ is $\AA^1$-contractible over $S$, its sheaf of relative differentials $\Omega_{\AA^1_S/S}$ is trivial, and it admits the zero section $0_S:S\to \AA^1_S$. Conversely, assume that $f:X\to S$ is an $\AA^1$-weak equivalence, that $\omega_f$ is trivial, and that $f$ admits a section. By \Cref{1-dim DD}, $f$ is a Zariski locally trivial $\AA^1$-bundle. Since the automorphism group scheme of $\AA^1_S$ is $\GG_{a,S}\rtimes \GG_{m,S}$, such a bundle is a torsor under a line bundle $L=\mathbb{V}(\mathcal{L})\to S$ for some invertible sheaf $\mathcal{L}$ on $S$. Because $S$ is normal, the pullback homomorphism $f^*:\Pic(S)\to \Pic(X)$ is an isomorphism (see \cite[Theorem~5]{magid1975picard}). The invertible sheaf $\mathcal{L}$ is therefore uniquely determined by the condition $f^*\mathcal{L}\cong \omega_f$. Since $\omega_f\cong \mathcal{O}_X$ by assumption, it follows that $\mathcal{L}$ is trivial, and hence that $f$ is a $\GG_{a,S}$-torsor. The existence of a section then implies that this torsor is trivial, so that $X\cong \AA^1_S$ over $S$.
\end{proof}

\begin{remark}
Since every torsor under a vector bundle over an affine scheme admits a section, \Cref{A1unique:DD} shows that for a noetherian normal \emph{affine} scheme $S$, the properties of being relatively $\AA^1$-contractible and having trivial relative canonical sheaf characterize $\AA^1_S$ among smooth $S$-schemes of finite presentation and relative dimension~$1$.
\end{remark}

As an application, we obtain the following relative version of the Zariski Cancellation Problem for curves (see also \cite{Ha75}).

\begin{corollary}
Let $f:X\to S$ be a smooth morphism of finite presentation and relative dimension $1$ over a normal scheme $S$. If there exists $n\geq 1$ such that
\[
X\times_S \AA^n_S \cong \AA^{n+1}_S,
\]
then $X$ is $S$-isomorphic to $\AA^1_S$.
\end{corollary}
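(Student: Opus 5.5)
The plan is to show first that $f$ is an $\AA^1$-weak equivalence, then invoke \Cref{1-dim DD} and \Cref{A1unique:DD}. Fix an isomorphism $\varphi: X\times_S \AA^n_S \to \AA^{n+1}_S$ over $S$.

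\textbf{Step 1: $f$ is an $\AA^1$-fiber space.} Let $s\in S$. Base change of $\varphi$ along $\Spec\kappa(s)\to S$ gives $X_s\times_{\kappa(s)}\AA^n_{\kappa(s)}\cong \AA^{n+1}_{\kappa(s)}$. The right-hand side is $\AA^1$-contractible over $\kappa(s)$. The projection $X_s\times\AA^n\to X_s$ is an $\AA^1$-weak equivalence, so $X_s$ is $\AA^1$-contractible over $\kappa(s)$. Since $X_s$ is a smooth curve, \Cref{1-dim theorem} gives $X_s\cong \AA^1_{\kappa(s)}$.

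The same argument works directly over $S$: the projection $X\times_S\AA^n_S\to X$ is an $\AA^1$-weak equivalence in $\mathrm{Spc}_S^{\AA^1}$, and $\AA^{n+1}_S\to S$ is one as well. By two-out-of-three, $f$ is an $\AA^1$-weak equivalence. This avoids any finite-dimensionality hypothesis needed for \Cref{cor:A1-cont-fibers}.

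\textbf{Step 2: trivial canonical sheaf and a section.} The relative differentials satisfy
\[
\Omega_{X\times_S\AA^n_S/S}\cong p^*\Omega_{X/S}\oplus\mathcal{O}^{n}.
\]
Via $\varphi$ this sheaf is free of rank $n+1$, so taking determinants gives $p^*\omega_f\cong\mathcal{O}$. Restricting along the zero section $X\to X\times_S\AA^n_S$ of $p$ then yields $\omega_f\cong \mathcal{O}_X$.

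For a section, let $\psi:\AA^{n+1}_S\to X\times_S\AA^n_S\to X$ be the composite of $\varphi^{-1}$ with the first projection. Then $\psi\circ 0_S:S\to X$ is an $S$-morphism, hence a section of $f$.

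\textbf{Step 3: conclusion.} Apply \Cref{A1unique:DD} to get $X\cong\AA^1_S$. The main obstacle is that \Cref{1-dim DD} and \Cref{A1unique:DD} are stated for locally noetherian (resp.\ noetherian) normal $S$, while the corollary only assumes $S$ normal.

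I would handle this as follows. The conclusion is Zariski local on $S$ once we know $f$ is an $\AA^1$-bundle with trivial line-bundle twist. So it suffices to work over affine opens $U=\Spec A$ and glue: the torsor class lies in $H^1(S,\mathcal{L})$ and its triviality is detected by the global section.

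For such a $U$, I would write $A$ as a filtered colimit of finitely generated $\mathbb{Z}$-subalgebras and descend $X$ and $\varphi$ to one of them, using that $f$ is of finite presentation. However, normality need not descend. An alternative is to rerun the arguments of \Cref{1-dim DD} directly in the non-noetherian normal setting: they use only local normality, codimension-two extension (Hartogs), and the fiberwise isomorphism obtained in Step 1.

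If neither route works cleanly, I would accept the noetherian hypothesis implicit in the earlier results. In that case Steps 1--2 together with \Cref{A1unique:DD} complete the proof.
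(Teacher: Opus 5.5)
Your proposal is correct and is essentially the paper's argument: two-out-of-three to see that $f$ is an $\AA^1$-weak equivalence, triviality of $\omega_f$, a section obtained from the zero section of $\AA^{n+1}_S$, and then \Cref{A1unique:DD}. Your derivation of $\omega_f\cong\mathcal{O}_X$, pulling $p^*\omega_f\cong\mathcal{O}$ back along the zero section of $p$, is slightly more elementary than the paper's appeal to normality of $X$ and homotopy invariance of $\Pic$.

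The noetherian mismatch you flag is equally present in the paper, which simply invokes \Cref{A1unique:DD} for a normal $S$. Your descent route removes it if you first localize at a point $s\in S$ and then replace each finitely generated $\mathbb{Z}$-subalgebra $A_i\subset\mathcal{O}_{S,s}$ by its normalization. That normalization is finite over $A_i$, hence noetherian and normal, and it still lies in $\mathcal{O}_{S,s}$ because the latter is integrally closed.
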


\begin{proof}
From the given isomorphism we obtain
\[
\mathrm{pr}_1^*\omega_f
\;\cong\;
\omega_{X\times_S \AA^n_S/S}
\;\cong\;
\omega_{\AA^{n+1}_S/S}
\;\cong\;
\mathcal{O}_{X\times_S \AA^n_S},
\]
where $\mathrm{pr}_1$ denotes the projection. Since $S$ is normal and $f$ is smooth, $X$ is normal, and the pullback homomorphism
\[
\mathrm{pr}_1^*:\Pic(X)\to \Pic(X\times_S \AA^n_S)
\]
is an isomorphism. It follows that $\omega_f$ is trivial. Moreover, $f$ admits a section obtained by composing the zero section $s_0:S\to \AA^{n+1}_S$ with the projection $\AA^{n+1}_S\cong X\times_S \AA^n_S\to X$. Finally, since both projections
\[
X\times_S \AA^n_S\to X
\quad\text{and}\quad
\AA^{n+1}_S\to S
\]
are $\AA^1$-weak equivalences, it follows that $f:X\to S$ is an $\AA^1$-weak equivalence. The conclusion now follows from \Cref{A1unique:DD}.
\end{proof}

\subsection{\texorpdfstring{$\AA^1$}{A1}-contractibility of smooth schemes of relative dimension 2}
\label{Sect:2-dim schemes}

Relative dimension $2$ is the first setting in which $\AA^1$-contractibility exhibits genuinely new phenomena. Unlike the case of curves, where a complete classification is available, the interaction between $\AA^1$-homotopy theory and the birational geometry of surfaces leads to a much richer and only partially understood picture.
\medskip

Over fields of characteristic zero, recent advances show that $\AA^1$-contractibility is rigid: smooth $\AA^1$-contractible surfaces are isomorphic to $\AA^2$. This result combines techniques from $\AA^1$-homotopy theory with deep structure theorems for affine surfaces.
In positive characteristic, however, several key ingredients break down. In particular, separability issues obstruct the use of $\AA^1$-curves, and existing methods do not rule out the existence of exotic $\AA^1$-contractible surfaces. 
\medskip

In the relative setting, new phenomena already appear over one-dimensional bases. While $\AA^1$-contractibility forces the structure of an $\AA^2$-fiber space under the assumption that the residue fields have characteristic zero, the problem of Zariski local triviality remains widely open beyond special cases (e.g.\ Sathaye’s theorem for Dedekind schemes in characteristic zero). Moreover, counterexamples in mixed characteristic show that naive generalizations fail. These considerations lead to the following guiding questions:
\begin{itemize}
\item Are there smooth affine $\AA^1$-contractible surfaces in positive characteristic that are not isomorphic to $\AA^2$?
\item When is an $\AA^2$-fiber space locally trivial in the Zariski topology?
\item To what extent do cancellation and rigidity phenomena persist in relative dimension $2$?
\end{itemize}

\subsubsection{Smooth $\AA^1$-contractible surfaces over a field}

All known results concerning the characterization of the affine plane $\mathbb{A}^2_k$ among surfaces over a perfect field $k$ (with fixed algebraic closure $\bar{k}$) ultimately rely on the following theorem, which synthesizes classical work of Kambayashi \cite{kambayashi1975absence}, Miyanishi–Sugie \cite{miyanishi1980affine}, and Russell \cite{russell1981affine}.

\begin{theorem} \label{thm:A2-charac}
A smooth separated surface $X$ of finite type over a perfect field $k$ is isomorphic to $\mathbb{A}^2_k$ if and only if it satisfies the following conditions:
\begin{enumerate}
    \item $X_{\bar{k}}$ is connected and connected at infinity, i.e., for every proper completion $X_{\bar{k}} \hookrightarrow Y$, the boundary $Y\setminus X_{\bar{k}}$ is a connected $\bar{k}$-scheme;
    \item $H^0(X_{\bar{k}},\mathcal{O}_{X_{\bar{k}}}^*)=\bar{k}^*$ and $\mathrm{Pic}(X_{\bar{k}})$ is trivial;
    \item there exists a dominant separable morphism $\varphi:\mathbb{A}^1_C\to X_{\bar{k}}$ for some curve $C$.
\end{enumerate}
\end{theorem}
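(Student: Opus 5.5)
The forward direction is routine and I would dispose of it first. For $X=\AA^2_k$ we have $X_{\bar k}=\AA^2_{\bar k}$, which is connected. Its standard completion $\PP^2_{\bar k}$ has boundary a line, and the boundary of any normal completion is connected: this follows from Zariski's connectedness/Hartogs-type argument, since the complement of an affine open in a normal proper surface is connected when the surface is connected. Units and Picard group are trivial because $\bar k[x,y]$ is a UFD whose units are $\bar k^*$. Finally, the identity $\AA^1_{\AA^1}\to\AA^2$ is a dominant separable morphism of the required form. So the real content is the converse, and I would reduce it to the algebraically closed case and then descend.

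Over $\bar k$, I would first show that $X_{\bar k}$ is affine, or pass to it. Condition (3) says $X_{\bar k}$ contains a separable $\AA^1$-cylinder, so $X_{\bar k}$ has negative log Kodaira dimension. Following Miyanishi--Sugie and Russell, a separable dominant $\AA^1_C\to X_{\bar k}$ yields an $\AA^1$-ruling: a morphism $X_{\bar k}\supset U\to B$ whose general fibers are $\AA^1$. Separability is exactly what is needed in positive characteristic to avoid quasi-elliptic-type pathologies.

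Connectedness at infinity, together with trivial units and trivial Picard group, then forces the following:
\begin{itemize}
\item the base satisfies $B\cong\AA^1$;
\item every fiber is irreducible and reduced, isomorphic to $\AA^1$ (reducible or multiple fibers would contribute classes in $\Pic$ or nonconstant units);
\item the ruling extends to all of $X_{\bar k}$ with no missing boundary components.
\end{itemize}
Hence $X_{\bar k}\to\AA^1$ is an $\AA^1$-fiber space over a smooth curve, so it is Zariski locally trivial by \Cref{1-dim DD}. It is therefore an $\AA^1$-bundle over $\AA^1$, which is trivial because $\Pic(\AA^1)=0$ and $H^1(\AA^1,\mathcal O)=0$. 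This gives $X_{\bar k}\cong\AA^2_{\bar k}$. If affineness is needed to run the Miyanishi--Sugie argument, I would derive it from trivial $\Pic$ and connectedness at infinity: a boundary divisor supporting an ample class can be constructed via Goodman's criterion, since $\Pic(X_{\bar k})=0$ forces the boundary components to generate the Néron--Severi group of a completion.

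The final step is descent: $X$ is a $k$-form of $\AA^2_k$ with $k$ perfect, so the form is separable. Kambayashi's theorem \cite{kambayashi1975absence} states that $\AA^2$ has no nontrivial separable forms. The idea behind it is that the automorphism group of $\AA^2$ is an amalgamated product of affine and triangular groups, and Galois cohomology with values in an amalgam reduces to that of the factors. The affine group has trivial $H^1$ by Hilbert 90 and additive Hilbert 90, and the triangular (de Jonquières) group is handled by the same filtration argument. This gives $X\cong\AA^2_k$.

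The main obstacle is the characteristic-free construction of the $\AA^1$-ruling from condition (3) and the proof that it has no degenerate fibers. This is where Miyanishi--Sugie and Russell are essential, and where separability enters. Here I would invoke their structure theorem for affine surfaces containing a separable cylinder rather than reprove it.
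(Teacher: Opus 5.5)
Your proposal is correct and follows the same route as the paper's sketch: condition (3) gives $\bar\kappa(X_{\bar k})=-\infty$; Miyanishi--Sugie/Russell, together with connectedness at infinity (which, as you note, effectively stands in for affineness), produce an $\AA^1$-cylinder; condition (2) forces the ruling to extend to an $\AA^1$-fiber space $X_{\bar k}\to\AA^1_{\bar k}$, which is trivial by \Cref{1-dim DD} together with $\Pic(\AA^1)=0=H^1(\AA^1,\mathcal{O})$; and Kambayashi's theorem then descends the isomorphism to $k$. The only slip is verbal: (3) says $X_{\bar k}$ is \emph{dominated} by a cylinder, not that it contains one, and containing an open cylinder is the output of Miyanishi--Sugie/Russell, which is how you actually use it.
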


The affine plane $\AA^2_k$ clearly satisfies these conditions. We briefly indicate the idea of the converse. Condition (3) implies that $X_{\bar{k}}$ has negative logarithmic Kodaira dimension $\bar{\kappa}(X_{\bar{k}})$ (see \cite{iitaka1977logarithmic,russell1981affine}). Together with the assumption that $X_{\bar{k}}$ is connected at infinity, results of \cite{miyanishi1980affine,russell1981affine} show that $X_{\bar{k}}$ contains an $\AA^1$-cylinder, i.e., a nonempty Zariski open subset $U$ isomorphic to $\AA^1_Z$ for a smooth affine $\bar{k}$-curve $Z$. 

Assumption (2) then implies that $Z$ is a Zariski open subset of $\AA^1_{\bar{k}}$, and that the projection $\mathrm{pr}_Z:U\to Z$ extends to an $\AA^1$-fiber space $X_{\bar{k}}\to \AA^1_{\bar{k}}$. By \Cref{1-dim DD}, the latter is a trivial $\AA^1$-bundle, and hence $X_{\bar{k}}\cong \AA^2_{\bar{k}}$. The conclusion that $X\cong \AA^2_k$ then follows from \cite{kambayashi1975absence}, which asserts that $\AA^2_k$ admits no nontrivial separable $k$-forms.

\medskip

A series of recent striking results of Choudhury--Roy \cite{choudhury2024, ChRoy26} on the existence of non-constant images of $\AA^1$ in $\AA^1$-connected schemes imply, in particular, that if $X$ is a smooth separated $\AA^1$-connected surface of finite type over a field $k$, then the base change $X_{\bar{k}}$ admits a dominant morphism $\varphi:\AA^1_C\to X_{\bar{k}}$ for some $\bar{k}$-curve $C$. Combining this result with \Cref{thm:A2-charac} yields the following characterization, which slightly strengthens \cite[Theorem~1.1]{choudhury2024}:

\begin{theorem}\label{A2isunique}\label{A2isunique-gen}
A smooth separated surface $X$ of finite type over a field $k$ of characteristic zero is $\AA^1$-contractible if and only if it is isomorphic to $\AA^2_k$.
\end{theorem}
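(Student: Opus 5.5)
The reverse implication is immediate, since $\AA^2_k\to\Spec(k)$ is a trivial bundle with $\AA^1$-contractible fiber (\Cref{eg:A1-contractibles}). For the forward implication, let $X$ be a smooth separated $\AA^1$-contractible surface of finite type over $k$, with $\operatorname{char}k=0$. The plan is to verify conditions (1)--(3) of \Cref{thm:A2-charac} for $X$; since $k$ is perfect, that theorem then gives $X\cong\AA^2_k$. By \Cref{basechange:imperfect}, $X_{\bar k}$ is $\AA^1$-contractible over $\bar k$, so we may check every condition on $X_{\bar k}$.

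For condition (2): $\AA^1$-contractibility gives $\mathcal{O}^*(X_{\bar k})=\bar k^*$ and $\Pic(X_{\bar k})=0$. The functors $U\mapsto\mathcal{O}(U)^*$ (because $\GG_m$ is $\AA^1$-rigid) and $U\mapsto\Pic(U)=[U,B\GG_m]_{\AA^1}$ (on smooth schemes, by $\AA^1$-invariance of $\Pic$ for regular schemes) are $\AA^1$-representable. Hence their values on $X_{\bar k}$ agree with their values on $\Spec\bar k$, as was already used in the proof of \Cref{1-dim theorem}.

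Condition (3) comes from the Choudhury--Roy result quoted just above. Since $X$ is $\AA^1$-connected, $X_{\bar k}$ admits a dominant morphism $\varphi:\AA^1_C\to X_{\bar k}$ for some curve $C$. In characteristic zero every dominant morphism is separable, so this is exactly what (3) requires.

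Condition (1) is the main obstacle. Connectedness of $X_{\bar k}$ follows from $\AA^1$-connectedness. Connectedness at infinity has to come from homotopical input, and the intended tool is the method of \cite{DDO}. The stable $\AA^1$-homotopy type of the punctured tubular neighbourhood of the boundary of a smooth completion $Y\supset X_{\bar k}$ depends only on the stable homotopy type of $X_{\bar k}$. For $\AA^1$-contractible $X_{\bar k}$ it therefore agrees with that of $\AA^2\setminus\{0\}$, a motivic sphere $S^{3,2}$. A disconnected boundary $B=B_1\sqcup B_2$ would split the punctured neighbourhood into two nonempty pieces, producing extra $H^{0}$-type or weight-zero classes (for instance, a nontrivial rank-zero part of motivic cohomology in bidegree $(0,0)$ of the punctured neighbourhood). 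That contradicts the sphere computation.

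A more classical alternative for (1) also exists. The surface is affine here: one could first prove affineness via \cite{DDO}, or, once a cylinder is known, use negativity of $\bar\kappa$ from (3). One then shows that a smooth affine surface with $\bar\kappa=-\infty$, trivial units and Picard group, and trivial rational or étale cohomology is connected at infinity. In that case a disconnected boundary would yield a nonconstant unit, or nontrivial $H^1$ or $H^2$ with compact supports, via the long exact sequence relating $X$, $Y$ and $B$. Either way, I expect this step to be the genuinely delicate point. Once (1)--(3) are established, \Cref{thm:A2-charac} concludes the proof.
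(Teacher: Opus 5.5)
Your proof is correct. For conditions (2) and (3) it is exactly the paper's argument; the difference is how you obtain condition (1).

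The paper uses the \cite{DDO} computation only to see that the boundary of a smooth SNC completion is connected. It then runs Fujita's argument to show that this boundary supports an effective ample divisor, so $X$ is affine (\Cref{prop:A1cont-implies-affine}). Finally it quotes the affine theorem of \cite{choudhury2024}, where connectedness at infinity comes from the Lefschetz hyperplane theorem.

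You stop after the connectedness step and feed it directly into \Cref{thm:A2-charac}. This is shorter and dispenses with the ampleness argument. The paper's detour buys affineness, an algebraic analogue of Fujita's theorem of independent interest. It also ensures that \Cref{thm:A2-charac} and the cylinder theorems of Miyanishi--Sugie and Russell are only applied to affine surfaces, the setting in which they are classically formulated. Your route instead relies on \Cref{thm:A2-charac} in the non-affine generality in which it is stated.

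Two things to tighten in your version. First, (1) asks for connectedness for every proper completion, so add the standard remark that connectedness of the boundary for one normal completion implies it for all: dominate a given completion by a normal one and use Zariski's Main Theorem. Second, the homotopy type at infinity is not a function of $\Sigma^\infty_{\PP^1}X_+$ alone. It is the fiber of the comparison map to the compactly supported homotopy type, which by Atiyah duality also involves the Thom spectrum of $T_X$. This is harmless here: $K_0(X_{\bar k})=\mathbb{Z}$, so that Thom spectrum is the one of a trivial bundle, which is what \cite[Example~4.3.8]{DDO} packages.

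Your ``classical alternative'' is muddled as written, for three reasons. Once $X$ is affine, connectedness at infinity is automatic (this is the paper's Lefschetz step), so the hypotheses on $\bar\kappa$, units and $\Pic$ play no role. A disconnected boundary does not in general produce a nonconstant unit; think of $\AA^2\setminus\{0\}$. And appealing to a cylinder is circular, since in the sketch of \Cref{thm:A2-charac} the cylinder is obtained using (1).

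Its compact-support idea does, however, give a cleaner proof of (1) than either \cite{DDO}-based route, with no affineness or $\bar\kappa$ input. For $\ell$ invertible in $k$, \'etale cohomology with $\mathbb{Z}/\ell$ coefficients is $\AA^1$-invariant and satisfies Nisnevich descent, so $H^3_{\mathrm{et}}(X_{\bar k},\mathbb{Z}/\ell)=0$. Poincar\'e duality then gives $H^1_c(X_{\bar k},\mathbb{Z}/\ell)=0$. The exact sequence $H^0(Y,\mathbb{Z}/\ell)\to H^0(B,\mathbb{Z}/\ell)\to H^1_c(X_{\bar k},\mathbb{Z}/\ell)$ then shows that $B=Y\setminus X_{\bar k}$ is connected for every proper completion $Y$ at once.
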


\begin{proof}
The statement is proved in \cite{choudhury2024} under the additional assumption that $X$ is affine; this hypothesis implies, via the Lefschetz hyperplane section theorem, that $X_{\bar{k}}$ is connected at infinity. Our only contribution is the observation that $\AA^1$-contractibility of $X$ already implies that $X$ is affine (see ~\Cref{prop:A1cont-implies-affine} below). 
\medskip

We briefly recall the argument of \cite{choudhury2024} under the affineness assumption. Since $X$ is $\AA^1$-contractible over $k$, its base change $X_{\bar{k}}$ is $\AA^1$-contractible over $\bar{k}$ by \Cref{cor:pushpull-A1-weak}(a). In particular, $X_{\bar{k}}$ is $\AA^1$-connected, hence connected as a scheme, and therefore admits, by the aforementioned result of Choudhury--Roy, a dominant generically finite morphism $\varphi:\AA^1_C\to X_{\bar{k}}$. As $k$ has characteristic zero, this morphism is automatically separable. Moreover, $\AA^1$-contractibility of $X_{\bar{k}}$ implies that $\mathrm{Pic}(X_{\bar{k}})$ is trivial and that $H^0(X_{\bar{k}},\mathcal{O}_{X_{\bar{k}}}^*)=\bar{k}^*$. The conclusion now follows from \Cref{thm:A2-charac}.
\end{proof}

In the proof of \Cref{A2isunique}, we used the following result, which may be viewed as an algebro-geometric counterpart of a classical theorem of Fujita \cite[p.~512, Theorem, item~3]{fujita1982}, asserting that a smooth complex algebraic surface whose analytification is topologically contractible is connected at infinity and, in fact, affine.

\begin{prop} \label{prop:A1cont-implies-affine}
A smooth separated $\AA^1$-contractible surface of finite type over a field $k$ is affine.
\end{prop}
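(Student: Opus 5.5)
We reduce first to an algebraically closed base field, and then show affineness by producing an ample effective divisor supported on the boundary of a good compactification.

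\emph{Step 1: reduction.} By \Cref{basechange:imperfect}, $X_{\bar k}$ is $\AA^1$-contractible over $\bar k$. Affineness descends along the faithfully flat quasi-compact morphism $X_{\bar k}\to X$ (by fpqc descent of affineness, or by Chevalley's theorem for the finite case), so it suffices to treat $k=\bar k$. Then $X$ is $\AA^1$-connected, hence connected and, being smooth, irreducible. Moreover, $\AA^1$-contractibility gives $\mathcal{O}(X)^*=k^*$ and $\Pic(X)=0$, since both are $\AA^1$-invariant representable invariants ($\Pic = [-,B\GG_m]_{\AA^1}$ on smooth schemes and $\GG_m$ is $\AA^1$-rigid). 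We also use that the motivic cohomology, and hence the Chow groups and the $\ell$-adic (resp.\ crystalline-free) \'etale cohomology $H^i_{\mathrm{et}}(X,\mathbb{Z}/\ell)$ for $\ell\neq \mathrm{char}\,k$, agree with those of a point, since these are representable in $\mathcal{H}(k)$.

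\emph{Step 2: compactification.} Choose a smooth projective completion $Y\supset X$. This is Nagata's theorem followed by resolution of surfaces, which is available in every characteristic. After further blow-ups we may assume the boundary $D=Y\setminus X$ has pure codimension one: if a boundary point were isolated, normality would let regular functions and the $\AA^1$-homotopy type extend over it. Concretely, $X$ and $X\cup\{p\}$ differ by removing a point, and by purity $X\cup\{p\}$ modulo $X$ is $\mathbb{P}^2/\mathbb{P}^1\simeq T^{\wedge 2}$, so the difference is visible in $H^4_{\mathrm{et}}$. So we aim for the following argument: $\Pic(X)=0$ and $\mathcal{O}(X)^*=k^*$ imply, via the localization sequence
\[
0\to \mathcal{O}(X)^*/k^*\to \bigoplus_i \mathbb{Z}D_i\to \Pic(Y)\to \Pic(X)\to 0,
\]
that the boundary components $D_i$ freely generate $\Pic(Y)=\mathrm{NS}(Y)$. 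Here we use that $H^1(Y,\mathcal{O})=0$, which we deduce from $H^1_{\mathrm{et}}(X,\mathbb{Z}/\ell)=0$ together with the Gysin sequence, giving $\mathrm{Pic}^0(Y)=0$.

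\emph{Step 3: the key step.} This step, which we expect to be the main obstacle, is to show that the intersection form on the $D_i$ permits an effective divisor $A=\sum a_iD_i$ with $A\cdot D_i>0$ for all $i$, and that $D$ is connected. Connectedness should follow from the vanishing of $H^1_{\mathrm{et}}$ and $H^3_{\mathrm{et}}$ of $X$ via duality and the localization sequence for $\mathbb{Z}/\ell$-cohomology of $(Y,D)$. This is where the \cite{DDO} viewpoint enters: the stable homotopy type of the punctured tubular neighborhood of $D$ is that of the boundary of $T^{\wedge 2}$, i.e.\ a motivic $3$-sphere, so $D$ is a connected tree-like configuration. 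Since the $D_i$ span $\mathrm{NS}(Y)$ and the intersection form is nondegenerate of signature $(1,\rho-1)$ by Hodge index, some class in the span of the $D_i$ is ample. We then need to arrange that it is effective and supported on $D$. Given an ample $H=\sum a_iD_i$, one has $H^2>0$, and we would run a Zariski-decomposition or Fujita-type argument to show that a positive combination of the $D_i$ is nef and big with null locus disjoint from $X$. If some $a_i<0$, we would argue that $-D_i$ moves, contradicting $\mathcal{O}(X)^*=k^*$. With this in hand, $X=Y\setminus\mathrm{Supp}(A)$ is the complement of the support of a big and nef divisor which is positive on every curve meeting $X$. By Goodman's criterion \cite{}-style reasoning, after blowing up boundary points one obtains an ample divisor supported exactly on the boundary, and therefore $X$ is affine.

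If Goodman's criterion is cumbersome to apply here, a fallback is to show that every complete curve $C\subset X$ is impossible: any such curve would give a nonzero class $[C]\in\Pic(Y)$ orthogonal to all $D_i$, hence zero by nondegeneracy, contradicting $C\cdot H>0$. We would then conclude by Goodman's theorem that a smooth surface containing no complete curves and whose boundary supports a big divisor is affine.
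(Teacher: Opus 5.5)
Your strategy is sound and differs from the paper's mainly in where the topological input comes from. The paper obtains connectedness of $\partial X$ from the stable $\AA^1$-homotopy type at infinity of \cite{DDO}: the punctured tubular neighbourhood must be stably equivalent to $\AA^2\setminus\{0\}$. It then defers the existence of an ample divisor supported on $\partial X$ to Fujita's argument.

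You instead extract three $\AA^1$-invariant consequences of contractibility over $\bar k$: $\mathcal{O}(X)^*=\bar k^*$, $\Pic(X)=0$ and $H^3_{\mathrm{et}}(X,\mathbb{Z}/\ell)=0$. From these you get three facts:
\begin{itemize}
\item the boundary components $D_i$ freely generate $\Pic(Y)$;
\item $D$ is connected, via $0\to H^0(Y)\to H^0(D)\to H^1_c(X)\cong H^3(X,\mu_\ell^{\otimes 2})^\vee=0$;
\item $X$ contains no complete curve (your fallback argument, which is correct).
\end{itemize}
This uses only \'etale cohomology and intersection theory on surfaces, and it makes explicit exactly the inputs that the paper's appeal to Fujita leaves implicit. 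The paper's route, by contrast, stays inside the homotopy-at-infinity formalism it advertises.

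Two side remarks. Isolated boundary points can simply be blown up, since they lie outside $X$, so no purity argument is needed. Also, $\Pic^0(Y)=0$ already follows from $\Pic(Y)$ being finitely generated and free. So the claim $H^1(Y,\mathcal{O}_Y)=0$, which is not justified in positive characteristic, is unnecessary.

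What does not work as written is the end of Step 3. In the main line, the assertion that a negative coefficient $a_i$ would make $-D_i$ move and contradict $\mathcal{O}(X)^*=k^*$ is not an argument. An anti-effective divisor has no sections, and nothing you have excludes negative coefficients in the expression of an ample class. The Zariski-decomposition and blow-up sketch is also not carried out.

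In the fallback, the form of Goodman's criterion you quote is false without connectedness of the boundary. Take $\AA^2\setminus\{0\}=\mathrm{Bl}_p\PP^2\setminus(L\sqcup E)$ with $L$ a line not through $p$. It has trivial units and Picard group, contains no complete curve, and its boundary supports the big divisor $L$, yet it is not affine.

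Since you do prove that $D$ is connected, the step closes directly:
\begin{itemize}
\item Let $M=(D_i\cdot D_j)$. Connectedness of $D$ makes $M+cI$ a nonnegative irreducible matrix for $c\gg 0$.
\item By Perron--Frobenius, the largest eigenvalue $\lambda$ of $M$ has an eigenvector $v$ with all entries positive. Moreover $\lambda>0$, because $a^{T}Ma=H^2>0$ for an ample $H\sim\sum a_iD_i$.
\item A rational vector $x$ close to $v$ still satisfies $x>0$ and $Mx>0$. After clearing denominators, $A=\sum x_iD_i$ is effective with support $D$ and $A\cdot D_i>0$ for all $i$.
\item For every other irreducible curve $C$ one has $A\cdot C\geq 0$, with equality only if $C$ is a complete curve in $X$, which you have ruled out.
\item Hence $A^2=\sum x_i\,(A\cdot D_i)>0$, so $A$ is ample by Nakai--Moishezon, and $X=Y\setminus\mathrm{Supp}(A)$ is affine.
\end{itemize}
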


\begin{proof}
It suffices to show that the base change $X_{\bar{k}}$ to an algebraic closure $\bar{k}$ of $k$ is affine. Since $X_{\bar{k}}$ is $\AA^1$-contractible by \Cref{cor:pushpull-A1-weak}(a), we may assume from the outset that $k$ is algebraically closed. We use the notion of the stable $\AA^1$-homotopy type at infinity introduced in \cite{DDO}. Because $X$ is $\AA^1$-contractible in the unstable $\AA^1$-homotopy category $\mathcal{H}(k)$, its $\mathbb{P}^1$-suspension spectrum $\Sigma_{\mathbb{P}^1}^\infty(X,x)$ (for any $k$-rational point $x\in X$) is contractible in the pointed stable $\AA^1$-homotopy category $\mathrm{SH}_\bullet(k)$.
\smallskip

Let $X\hookrightarrow \bar{X}$ be a completion into a proper $k$-scheme. Using resolution of singularities for surfaces, we may assume that $\bar{X}$ is smooth and that the boundary $\partial X=\bar{X}\setminus X$ is a divisor with simple normal crossings. By \cite[Example~4.3.8]{DDO}, the $\AA^1$-contractibility of $\Sigma_{\mathbb{P}^1}^\infty(X,x)$ implies that the stable punctured tubular neighborhood $\mathrm{Tub}^{\times}(\bar{X},\partial X)$ has the same stable $\AA^1$-homotopy type as the $\mathbb{P}^1$-suspension spectrum of $\AA^2_k\setminus\{(0,0)\}$. Using the description of $\mathrm{Tub}^{\times}(\bar{X},\partial X)$ in terms of the irreducible components of $\partial X$ (see \cite[Section~4.2]{DDO}), it follows that $\partial X$ is connected. The same argument as in \cite[p.~512, Theorem, item~3]{fujita1982} then shows that $\partial X$ supports an effective ample divisor on $\bar{X}$, and hence $X$ is affine.
\end{proof}

It follows from the proof of \Cref{A2isunique} that any smooth separated $\AA^1$-contractible surface $X$ of finite type over a perfect field $k$ (with fixed algebraic closure $\bar{k}$) is isomorphic to $\AA^2_k$, provided that $X_{\bar{k}}$ admits a dominant \emph{separable} morphism $\varphi:\AA^1_C\to X_{\bar{k}}$ for some $\bar{k}$-curve $C$. 
\smallskip

As explained above, a principal result of \cite{choudhury2024,ChRoy26} asserts that the $\AA^1$-connectedness of $X_{\bar{k}}$ guarantees the existence of a dominant generically finite morphism $\varphi:\AA^1_C\to X_{\bar{k}}$. However, in positive characteristic, the methods of {\it loc.\ cit.} do not ensure that such a morphism can be chosen to be separable. The following example shows that this issue is genuine: in positive characteristic, there exist smooth $\AA^1$-connected affine surfaces which do not admit any dominant generically finite \emph{separable} morphism $\varphi:\AA^1_C\to X$.

\begin{example} \label{exa:annoying}
Let $k$ be an algebraically closed field of characteristic $p>0$, and consider $\AA^2_k=\Spec(k[x,y])$. Let $\alpha_p=\Spec(k[t]/(t^p))$ act freely on $\AA^2_k$ via
\[
(x,y)\longmapsto \bigl(x+t(p+1)y^p,\; y+t((xy)^p+1)\bigr),
\]
corresponding to the $p$-derivation $\partial=(p+1)y^p\partial_x-((xy)^p+1)\partial_y$ of $k[x,y]$. The kernel $R$ of $\partial$ is generated by
\[
u=x^p,\quad v=y^p,\quad w=(xy+1)^px+y^{p+1},
\]
and the quotient morphism
\[
\pi:\AA^2_k\to X:=\AA^2_k/\alpha_p=\Spec(R)
\]
is an $\alpha_p$-torsor. The surface $X$ is smooth over $k$, and can be identified with the hypersurface in $\AA^3_k=\Spec(k[u,v,w])$ defined by
\[
w^p=(uv+1)^pu+v^{p+1}.
\]
Moreover, the projection $\mathrm{pr}_{u,v}:X\to \AA^2_k$ exhibits $X$ as a purely inseparable finite cover of degree $p$ of $\AA^2_k$, and the composition $\mathrm{pr}_{u,v}\circ \pi:\AA^2_k\to \AA^2_k$ coincides with the Frobenius morphism $(x,y)\mapsto (x^p,y^p)$.

The morphism $\pi:\AA^2_k\to X$ is a purely inseparable universal homeomorphism. In particular, $X$ is $\AA^1$-chain connected, and hence $\AA^1$-connected. Furthermore, $H^0(X,\mathcal{O}_X^*)=k^*$, and since the only $\alpha_p$-linearized invertible sheaf on $\AA^2_k$ is the trivial one with trivial linearization, it follows that $\Pic(X)$ is trivial.

On the other hand, by \cite[Example~1.8.3.2]{ItoMiy21}, a minimal smooth proper completion $\bar{X}$ of $X$ with boundary a simple normal crossing divisor is a projective surface of general type. In particular, the logarithmic Kodaira dimension of $X$ satisfies $\bar{\kappa}(X)=2$. Consequently, $X$ is not isomorphic to $\AA^2_k$, and does not admit any dominant generically finite \emph{separable} morphism $\varphi:\AA^1_C\to X$.
\end{example}

We are currently unable to determine whether the surfaces $X$ constructed in \Cref{exa:annoying} are $\AA^1$-contractible over $k$. In particular, the following question remains open:

\begin{question}
Let $k$ be a perfect field of positive characteristic. Do there exist smooth affine surfaces over $k$ which are $\AA^1$-contractible but not isomorphic to $\AA^2_k$?
\end{question}

\begin{remark}
The affine plane $\AA^2_k$ is known to admit nontrivial forms over any imperfect field $k$. For instance, for every such field $k$, the product $C\times_k \AA^1_k$, where $C$ is a nontrivial purely inseparable form of $\AA^1_k$, defines a purely inseparable $k$-form of $\AA^2_k$. By \Cref{ex:forms-A1}, any such form either has no $k$-rational point or has a nontrivial Picard group, and is therefore not $\AA^1$-contractible over $k$.
\smallskip

In contrast with the case of $\AA^1$, very little is known about the classification of purely inseparable forms of $\AA^2$. For example, it appears to be an open problem whether there exist nontrivial forms of $\AA^2_k$ that admit a $k$-rational point and have trivial Picard group.
\end{remark}

\subsubsection{$\AA^1$-contractibility of smooth schemes of relative dimension $2$ over a base}
\label{sect:reldim=2}

\begin{theorem}\label{thm:Reldim2}
Let $S$ be a scheme whose residue fields have characteristic zero, and let $f:X\to S$ be a smooth separated morphism of finite type and relative dimension $2$. If $f$ is an $\AA^1$-weak equivalence in $\mathrm{Spc}_S^{\AA^1}$, then $f$ is an $\AA^2$-fiber space.
\end{theorem}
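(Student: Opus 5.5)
The plan is to reduce to the fiberwise statement over fields and then invoke \Cref{A2isunique}. Let $s\in S$ be any point, with residue field $\kappa(s)$ of characteristic zero by assumption. Base change along $\Spec(\kappa(s))\to S$ and \Cref{pullbackfunctor} show that $X_s\to\Spec(\kappa(s))$ is an $\AA^1$-weak equivalence in $\mathrm{Spc}_{\kappa(s)}^{\AA^1}$. This step uses only \Cref{cor:pushpull-A1-weak}(1), so it needs neither a finite-dimensionality nor a noetherian hypothesis on $S$. It is the easy direction of \Cref{cor:A1-cont-fibers}.

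Next we check that $X_s$ satisfies the hypotheses of \Cref{A2isunique}. Since $f$ is smooth, separated, of finite type and of relative dimension $2$, the fiber $X_s$ is a smooth separated $\kappa(s)$-scheme of finite type, purely of dimension $2$. We must also make sure $X_s$ is a surface in the sense used there, namely connected (and then irreducible, being smooth). This follows because an $\AA^1$-contractible scheme is $\AA^1$-connected, and hence connected as a topological space, exactly as in the proof of \Cref{0-dim prop}. \Cref{A2isunique}, applied over the characteristic zero field $\kappa(s)$, then gives $X_s\cong\AA^2_{\kappa(s)}$. Implicitly this also uses \Cref{prop:A1cont-implies-affine}, which removes any affineness hypothesis.

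Since this holds for every $s\in S$, and $f$ is smooth, the remaining requirement of \Cref{def:A1-fib-space} is finite presentation. Here I expect the only mild obstacle. The statement only assumes ``finite type'', while \Cref{def:A1-fib-space} asks for finite presentation. This is harmless under the paper's standing convention that smooth $S$-schemes are of finite presentation, since smooth morphisms are locally of finite presentation, and separatedness plus quasi-compactness of finite type gives finite presentation. With this, $f$ is an $\AA^2$-fiber space.

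The genuine mathematical content lies entirely in \Cref{A2isunique}, and through it in Choudhury--Roy and \Cref{thm:A2-charac}. The characteristic zero hypothesis on \emph{all} residue fields, including those of non-closed points, is essential: it is used pointwise to guarantee the separability of the morphism $\AA^1_C\to X_{\bar\kappa(s)}$, and the perfectness needed to descend from $\bar\kappa(s)$ to $\kappa(s)$ via Kambayashi's absence of nontrivial separable forms.
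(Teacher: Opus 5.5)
Your proposal is correct and follows the paper's proof. The paper likewise base changes along each $\Spec(\kappa(s))\to S$ via \Cref{cor:pushpull-A1-weak}(1) and applies \Cref{A2isunique} fiberwise. Your extra checks are correct bookkeeping that the paper leaves implicit: nonempty connected fibers via $\AA^1$-connectedness, and finite presentation from smooth plus finite type plus separated.
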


\begin{proof}
This follows immediately from \Cref{cor:pushpull-A1-weak}(a) and \Cref{A2isunique}.
\end{proof}

Conversely, \Cref{thm:glue-A1-fiber} and \Cref{thm:An-fiber-space-contractible} provide criteria under which an $\AA^2$-fiber space $f:X\to S$ is an $\AA^1$-weak equivalence in $\mathrm{Spc}_S^{\AA^1}$. Unlike the case of relative dimension $1$, however, the question of Zariski local triviality for $\AA^2$-fiber spaces remains largely open, even when the base scheme $S$ is smooth over a field of characteristic zero.
\smallskip

The only general result in this direction, due to Sathaye \cite{sathaye1983polynomial}, asserts that an affine $\AA^2$-fiber space over a Dedekind scheme with residue fields of characteristic zero is Zariski locally trivial. Combining this with \Cref{thm:Reldim2}, we obtain the following characterization:

\begin{corollary}\label{2-dim DD}
Let $f:X\to S$ be a smooth affine morphism of finite presentation and relative dimension $2$ over a Dedekind scheme $S$ with residue fields of characteristic zero. The following are equivalent:
\begin{enumerate}
    \item $f$ is an $\AA^1$-weak equivalence in $\mathrm{Spc}_S^{\AA^1}$;
    \item $f$ is a Zariski locally trivial $\AA^2$-bundle.
\end{enumerate}
\end{corollary}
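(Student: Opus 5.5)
The equivalence splits into two implications, each read off from earlier results together with Sathaye's theorem.

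\emph{(2) $\Rightarrow$ (1).} Assume $f$ is a Zariski locally trivial $\AA^2$-bundle. Then $f$ is in particular Nisnevich locally trivial with fibers $\AA^2$. These fibers are $\AA^1$-contractible, so \Cref{eg:A1-contractibles} shows that $f$ is an $\AA^1$-weak equivalence in $\mathrm{Spc}_S^{\AA^1}$.

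A second route to the same conclusion: a locally trivial $\AA^2$-bundle is an $\AA^2$-fiber space. A Dedekind scheme is noetherian of Krull dimension at most one. To stay within the standing conventions, I would cover $S$ by affine opens, which are qcqs of finite Krull dimension, and use \Cref{lem:A1-cont-local-on-base} to glue. \Cref{thm:glue-A1-fiber} then applies on each open. Either route suffices.

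\emph{(1) $\Rightarrow$ (2).} Assume $f$ is an $\AA^1$-weak equivalence. Since $f$ is affine, it is separated. It is of finite presentation, hence of finite type, and its relative dimension is $2$. The residue fields of $S$ have characteristic zero. So \Cref{thm:Reldim2} applies and $f$ is an $\AA^2$-fiber space.

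Concretely, for each $s\in S$, \Cref{cor:pushpull-A1-weak}(1) shows that $X_s$ is a smooth separated $\AA^1$-contractible surface over $\kappa(s)$. Then \Cref{A2isunique} gives $X_s\cong\AA^2_{\kappa(s)}$.

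Thus $f$ is an affine $\AA^2$-fiber space, meaning flat, finitely presented, with all fibers affine planes, over a Dedekind scheme with residue fields of characteristic zero. Sathaye's theorem \cite{sathaye1983polynomial} now says that $f$ is Zariski locally trivial.

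The main obstacle is matching hypotheses with Sathaye's theorem. His result is stated for polynomial-fiber algebras over a discrete valuation ring containing $\mathbb{Q}$, i.e.\ local on the base. So I would work locally: for each point $s\in S$, pass to $\Spec\mathcal{O}_{S,s}$. At a closed point this is a DVR whose residue field and fraction field both have characteristic zero, so it contains $\mathbb{Q}$. At the generic point the fiber is already $\AA^2$, and no further argument is needed there.

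Sathaye then gives an isomorphism $X\times_S\Spec\mathcal{O}_{S,s}\cong\AA^2_{\mathcal{O}_{S,s}}$. Both sides are finitely presented over the base, so this isomorphism spreads out to some Zariski open neighborhood $U$ of $s$, giving $X_U\cong\AA^2_U$. This yields Zariski local triviality and completes the proof.
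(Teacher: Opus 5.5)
Your proposal is correct and follows the paper's route: (1)$\Rightarrow$(2) is \Cref{thm:Reldim2} followed by Sathaye's theorem, and (2)$\Rightarrow$(1) is \Cref{eg:A1-contractibles} (or \Cref{thm:glue-A1-fiber}, which applies directly since a Dedekind scheme is noetherian). Your localization to the DVRs $\mathcal{O}_{S,s}$, which contain $\mathbb{Q}$ because their residue fields have characteristic zero, followed by spreading out the trivialization, just spells out the one-line citation of Sathaye that the paper relies on.
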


\begin{corollary}\label{A2unique:DD}
Let $f:X\to S$ be a smooth affine morphism of finite presentation and relative dimension $2$ over an \emph{affine} Dedekind scheme $S$ with residue fields of characteristic zero. Then $f$ is isomorphic to the trivial $\AA^2$-bundle $\AA^2_S\to S$ if and only if $f$ is an $\AA^1$-weak equivalence in $\mathrm{Spc}_S^{\AA^1}$ and the relative canonical sheaf $\omega_f$ is trivial.
\end{corollary}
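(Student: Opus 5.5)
The forward implication is routine. The trivial bundle $\mathrm{pr}_S:\AA^2_S\to S$ is $\AA^1$-contractible by \Cref{eg:A1-contractibles}, and $\omega_{\AA^2_S/S}=\Omega^2_{\AA^2_S/S}$ is free, generated by $dx\wedge dy$.

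For the converse, assume $f$ is an $\AA^1$-weak equivalence and $\omega_f\cong\mathcal{O}_X$. By \Cref{2-dim DD}, $f$ is a Zariski locally trivial $\AA^2$-bundle. The task is to show that the triviality of $\omega_f$, together with $S$ being affine Dedekind, forces global triviality.

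The key input is Bass--Connell--Wright: a Zariski locally trivial $\AA^n$-bundle over an affine base is isomorphic to a vector bundle. More precisely, if $X\to S=\Spec(A)$ is locally trivial, then $\Gamma(X,\mathcal{O}_X)\cong\Sym_A(P)$ for a projective $A$-module $P$ of rank $2$. Since $f$ is affine, $X=\Spec\Gamma(X,\mathcal{O}_X)$, so $X\cong\mathbb{V}(P)$. I will invoke this theorem, which is standard for locally polynomial algebras over any commutative ring.

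With $X\cong\mathbb{V}(P)$, the relative differentials are $\Omega_{X/S}\cong f^*\check{P}$ (up to the usual dual convention), so $\omega_f\cong f^*\det(\check P)$. The pullback $f^*:\Pic(S)\to\Pic(X)$ is injective because the zero section gives a retraction. Hence $\det P$ is trivial.

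Over a Dedekind domain $A$, Steinitz's structure theorem gives $P\cong A\oplus\det P$. So $P\cong A^2$ and $X\cong\AA^2_S$.

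The main obstacle is the passage from local to global triviality, that is, Bass--Connell--Wright. The affine hypothesis on $S$ is essential there; without it one only obtains a twisted form, with no vector bundle structure in general. One must also check that the relative canonical sheaf is computed correctly under the vector bundle identification, which is immediate once the isomorphism $X\cong\mathbb{V}(P)$ over $S$ is fixed. If I wanted to avoid Bass--Connell--Wright, an alternative is to use that the transition functions can be chosen in the tame automorphism group $\mathrm{Aut}(\AA^2_A)$ over a Dedekind base and reduce to affine-linear ones. However, citing Bass--Connell--Wright is cleaner.
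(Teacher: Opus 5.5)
Your argument is correct and follows the paper's proof: \Cref{2-dim DD} gives Zariski local triviality, Bass--Connell--Wright identifies $X$ with $\mathbb{V}(P)$, and the rank-$2$ projective module splits as $A\oplus\det P$, where $\det P$ is detected by $\omega_f$. Your zero-section retraction (giving injectivity of $f^*$ on $\Pic$) and Steinitz's theorem are lighter substitutes for the paper's use of the isomorphism $p^*:\Pic(S)\xrightarrow{\sim}\Pic(E)$ and Serre's splitting theorem. One caution about your closing aside, which your proof does not use: transition automorphisms over a Dedekind base cannot in general be taken tame, since Nagata's automorphism of $k[z][x,y]$ is not tame over $k[z]$.
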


\begin{proof}
One implication is immediate. Conversely, by \Cref{2-dim DD}, the morphism $f:X\to S$ is a Zariski locally trivial $\AA^2$-bundle. Since $S$ is affine, it follows from \cite[Theorem~4.9]{BCW76} that $f$ is in fact isomorphic over $S$ to a rank-$2$ vector bundle
\[
p:E=\Spec_S(\Sym^{\bullet}\mathcal{E})\to S
\]
for some locally free sheaf $\mathcal{E}$ of rank $2$ on $S$. As $S$ is one-dimensional, \cite[Theorem~1]{SerreModulesprojectifs} implies that $\mathcal{E}\cong \mathcal{O}_S\oplus \mathcal{L}$ for some invertible sheaf $\mathcal{L}$ on $S$.

Since
\[
\omega_f=\det \Omega_{E/S}\cong p^*\det(\mathcal{E})\cong p^*\mathcal{L},
\]
and the pullback homomorphism $p^*:\Pic(S)\to \Pic(E)$ is an isomorphism, the triviality of $\omega_f$ is equivalent to the triviality of $\mathcal{L}$, and hence to the triviality of $\mathcal{E}$. This shows that $E\cong \AA^2_S$, as desired.
\end{proof}

\begin{corollary}\label{cor:cancel-rel-2}
Let $S$ be an affine Dedekind scheme with residue fields of characteristic zero, and let $f:X\to S$ be a smooth affine $S$-scheme of finite presentation and relative dimension $2$. If there exists $n\geq 0$ such that
\[
X\times_S \AA^n_S \cong \AA^{n+2}_S
\]
as $S$-schemes, then $X\cong \AA^2_S$.
\end{corollary}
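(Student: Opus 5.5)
The plan is to reduce to \Cref{A2unique:DD}, which says that $f$ is isomorphic to $\AA^2_S$ as soon as $f$ is an $\AA^1$-weak equivalence in $\mathrm{Spc}_S^{\AA^1}$ and $\omega_f$ is trivial. Since $f$ is smooth affine of finite presentation and relative dimension $2$ over an affine Dedekind scheme with residue fields of characteristic zero, the remaining hypotheses of \Cref{A2unique:DD} hold already, and only these two conditions need to be checked. The case $n=0$ is trivial, so assume $n\geq 1$ and fix an $S$-isomorphism $\Phi: X\times_S\AA^n_S\cong \AA^{n+2}_S$.

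\emph{$\AA^1$-contractibility.} The projection $\mathrm{pr}_1: X\times_S\AA^n_S\to X$ is a trivial $\AA^n$-bundle, so it is an $\AA^1$-weak equivalence in $\mathrm{Spc}_X^{\AA^1}$ by \Cref{eg:A1-contractibles}. By \Cref{cor:pushpull-A1-weak}(2), applied to the smooth morphism $f$, it is then also an $\AA^1$-weak equivalence in $\mathrm{Spc}_S^{\AA^1}$. The composite $f\circ \mathrm{pr}_1$ is identified via $\Phi$ with $\mathrm{pr}_S:\AA^{n+2}_S\to S$, which is an $\AA^1$-weak equivalence. The two-out-of-three property in $\mathrm{Spc}_S^{\AA^1}$ then shows that $f$ is an $\AA^1$-weak equivalence. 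Alternatively, one can argue fiberwise: over each $s\in S$ we get $X_s\times\AA^n\cong\AA^{n+2}_{\kappa(s)}$, and then apply \Cref{cor:A1-cont-fibers}. The first argument is cleaner.

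\emph{Triviality of $\omega_f$.} As in the proof of the relative-dimension-$1$ cancellation corollary, we have
\[
\mathrm{pr}_1^*\omega_f\cong \omega_{X\times_S\AA^n_S/S}\cong \Phi^*\omega_{\AA^{n+2}_S/S}\cong \mathcal{O}_{X\times_S\AA^n_S}.
\]
Since $S$ is regular and $f$ is smooth, $X$ is regular, and in particular normal. Hence $\mathrm{pr}_1^*:\Pic(X)\to\Pic(X\times_S\AA^n_S)$ is an isomorphism by homotopy invariance of $\Pic$ for normal (indeed regular) schemes, and it follows that $\omega_f\cong\mathcal{O}_X$. Applying \Cref{A2unique:DD} now gives $X\cong\AA^2_S$.

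The main point requiring care is this Picard injectivity step. It requires noting that $X$ is normal, which holds since $X$ is smooth over a regular base; no further obstacle is expected. A secondary check is that $\AA^1$-contractibility transfers through the isomorphism $\Phi$ in $\mathrm{Spc}_S^{\AA^1}$, which is immediate because $\Phi$ is an isomorphism of $S$-schemes.
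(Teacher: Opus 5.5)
Your proposal is correct and follows essentially the same route as the paper: you show $\omega_f$ is trivial by pulling back along $\mathrm{pr}_1$ on Picard groups, deduce that $f$ is an $\AA^1$-weak equivalence from the isomorphism $X\times_S\AA^n_S\cong\AA^{n+2}_S$, and then apply \Cref{A2unique:DD}. One small simplification: only injectivity of $\mathrm{pr}_1^*$ on $\Pic$ is needed, and this holds for any $X$ because the zero section $s_0\colon X\to X\times_S\AA^n_S$ satisfies $s_0^*\circ\mathrm{pr}_1^*=\mathrm{id}$, so you do not actually need $X$ to be normal.
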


\begin{proof}
Since $\omega_{\AA^m_S/S}$ is trivial for every $m\geq 0$, the existence of an isomorphism
\[
X\times_S \AA^n_S \cong \AA^{n+2}_S
\]
implies that
\[
\mathrm{pr}_1^*\omega_f \cong \omega_{X\times_S \AA^n_S/S} \cong \omega_{\AA^{n+2}_S/S}
\]
is trivial. As $\mathrm{pr}_1^*:\Pic(X)\to \Pic(X\times_S \AA^n_S)$ is an isomorphism, it follows that $\omega_f$ itself is trivial.

Moreover, the same isomorphism shows that $f:X\to S$ is an $\AA^1$-weak equivalence in $\mathrm{Spc}_S^{\AA^1}$. The conclusion now follows from \Cref{A2unique:DD}.
\end{proof}

The following variant of \Cref{eg:Asanuma3F} exhibits an affine $\AA^2$-fiber space over the spectrum of a discrete valuation ring whose closed residue field has positive characteristic and which is not a trivial $\AA^2$-bundle. In particular, it illustrates that Sathaye’s result \cite{sathaye1983polynomial} cannot be extended to Dedekind schemes of positive or mixed characteristic, and it also provides a counterexample to \Cref{cor:cancel-rel-2} in that setting.

\begin{example}
Let $S=\Spec(R)$ be the spectrum of a discrete valuation ring with uniformizer $\pi$ and residue field $\kappa=R/\pi R$ of characteristic $p>0$. Let $e,s$ be positive integers such that $sp\nmid p^e$ and $p^e\nmid sp$, and let
\[
X\subset \AA^3_S=\Spec(R[y,z,t])
\]
be the closed subscheme defined by the equation
\[
\pi y - z^{p^e} + t + t^{sp} = 0.
\]
Then, by \cite[Corollary~5.3]{asanuma1987polynomial}, the projection $f=\mathrm{pr}_S:X\to S$ is an $\AA^2$-fiber space which is not isomorphic to the trivial $\AA^2$-bundle $\AA^2_S\to S$, but satisfies
\[
X\times_S \AA^1_S \cong \AA^3_S
\]
as $S$-schemes.
\end{example}


\bibliographystyle{alpha}
\bibliography{DMO}

\end{document}